\documentclass[12pt]{article}
\usepackage[T1]{fontenc}
\usepackage{verbatim}
\usepackage[margin=2.5cm]{geometry}
\usepackage{algorithm2e}
\usepackage{here}

\usepackage{cite}

\usepackage{setspace}
\usepackage{algpseudocode}
\usepackage{dsfont}
\usepackage{hyperref}
\usepackage{url}
\usepackage{mathtools}
\usepackage[inline]{enumitem}
\usepackage{microtype}
\usepackage{graphicx}
\usepackage{nicematrix}
\usepackage{amsmath}
\usepackage{amssymb}
\usepackage{amsthm}
\usepackage{mathtools}
\usepackage{comment}
\usepackage{nicefrac}
\usepackage{amsfonts,latexsym,graphics}
\usepackage{color}
\usepackage{multirow}
\usepackage[english]{babel}
\usepackage{epsfig}
\usepackage{bm}
\usepackage{makeidx}
\usepackage{subcaption}
\usepackage{adjustbox}
\usepackage{booktabs}
\usepackage{tabularray}

\usepackage{tikz}
\usetikzlibrary{tikzmark}
\usetikzlibrary{positioning}
\usetikzlibrary{graphs}
\usetikzlibrary{graphs.standard}
\usetikzlibrary{decorations.pathmorphing}
\usetikzlibrary{decorations.text}
\pgfdeclarelayer{bg}
\pgfsetlayers{bg,main}

\usepackage{authblk}

\usepackage{xspace}

\newcommand{\norm}[2]{\| #1 \|_{#2}}

\def\matrix0{{\mbox {\boldmath $O$}}}

\def\vecs{{\mbox{\boldmath $s$}}}
\def\vect{{\mbox{\boldmath $t$}}}

\def\vecx{{\mbox{\boldmath $x$}}}
\def\vecy{{\mbox{\boldmath $y$}}}

\def\j{{\mbox{\boldmath $1$}}}
\def\vec0{\mbox{\bf 0}}

\def\vecnu{{\mbox{\boldmath $\nu$}}}
\def\vecchi{{\mbox{\boldmath $\chi$}}}

\theoremstyle{definition}
\newtheorem{definition}{Definition}
\newtheorem{example}[definition]{Example}
\newtheorem{observation}[definition]{Observation}
\newtheorem{remark}[definition]{Remark}

\theoremstyle{plain}
\newtheorem{proposition}[definition]{Proposition}
\newtheorem{lemma}[definition]{Lemma}
\newtheorem{theorem}[definition]{Theorem}

\newtheorem{corollary}[definition]{Corollary}
\newtheorem{conjecture}[definition]{Conjecture}

\newcommand*{\claimproofname}{Proof}

\newcommand\tran{\mkern-2mu\raise1.25ex\hbox{$\scriptscriptstyle\top$}\mkern-3.5mu}

\def\dist{\mathop{\rm dist }\nolimits}

\def\diam{\mathrm{diam}}

\DeclarePairedDelimiter\abs{\lvert}{\rvert}
\DeclarePairedDelimiter\inn{\langle}{\rangle}
\DeclarePairedDelimiter\floor{\lfloor}{\rfloor}
\DeclarePairedDelimiter\ceil{\lceil}{\rceil}

\DeclareMathOperator{\Sym}{Sym}

\SetArgSty{}
\SetKwFor{For}{for}{do}{}
\SetKwInOut{Input}{Input}
\SetKwInOut{Output}{Output}

\makeatletter
\let\c@figure\c@table
\let\ftype@figure\ftype@table
\let\ext@figure\ext@table

\makeatother 

\title{The Zero Forcing Number of Graph Powers}

\author[1,2]{Aida Abiad}
\author[3]{Mary Flagg}
\author[4,5]{Sina Ghasemi Nezhad}
\author[1]{Sjanne Zeijlemaker}

\affil[1]{Department of Mathematics and Computer Science, Eindhoven University of Technology, The Netherlands}
\affil[2]{Department of Mathematics and Data Science, Vrije Universiteit Brussel, Belgium}
\affil[3]{Department of Mathematics, Statistics and Computer Science, University of St.~Thomas, Houston, TX, USA}
\affil[4]{Department of Computer Science, Vrije Universiteit Amsterdam, The Netherlands}
\affil[5]{Informatics Institute, Faculty of Science, University of Amsterdam, The Netherlands}

\affil[ ]{
    \texttt{a.abiad.monge@tue.nl},
    \texttt{flaggm@stthom.edu},
    \texttt{s.ghaseminezhad@student.vu.nl}
}

\date{}

\begin{document}

\maketitle

\begin{abstract}
The $k$-th power of a simple graph $G$, denoted $G^k$, is the graph with vertex set $V(G)$ where two vertices are adjacent if they are within distance $k$ in $G$. We investigate the zero forcing number of graph powers. Powers of graphs are much denser and generally not encompassed by existing results on zero forcing of graphs, hence their study requires a different approach. In contrast with the usual zero forcing behavior under edge deletion, we show that the zero forcing parameter (and variations of it) is monotone with respect to taking powers. We also determine the zero forcing number of powers of paths and cycles, together with upper bounds for powers of spiders and of grids. We then present spectral lower bounds on the zero forcing number of graph powers which uniquely use the spectrum of the base graph, as well as linear programming methods to compute these bounds. Finally, we study the sharpness of the derived bounds. To derive our results we use techniques ranging from graph theory, linear algebra and polynomial optimization. \\

\noindent \textbf{Keywords:} graph power; zero forcing number; maximum nullity; eigenvalues; polynomials; linear optimization.

\noindent \textbf{MSC 2020:} 05C50; 15A18.
\end{abstract}

\section{Introduction}\label{sec:introduction}

For a positive integer $k$, the $k$-th \emph{graph power} of a graph $G=(V, E)$, denoted by $G^k$, is the graph with vertex set $V$ in which two distinct vertices are joined by an edge if there is a path in $G$ of length at most $k$ between them. Graph powers help design efficient algorithms for specific combinatorial optimization problems, see e.g. \cite{AB1995,BC2000}. In distributed computing, the $k$-th power of graph $G$ represents the possible flow of information during $k$ rounds of communication in a distributed network of processors organized according to a graph $G$ \cite{l92}. Thus, the investigation of the behavior of various parameters of powers of a fixed graph leads to many fascinating problems, some of which are motivated by questions in information theory and coding theory. There are also some results of combinatorial nature which have focused on graph powers: the chromatic number \cite{AM2002,H2009,Hav2009,KK1969,KP2016}, the Shannon capacity \cite{AL2007v1,AL2007v2}, the rainbow connection number \cite{Basavaraju2014RainbowProducts}, or the edge growth of graph squares \cite{G2014}, among others. Less is known regarding the spectral properties of the graph power in terms of the spectral properties of the base graph  \cite{acfnz21,Das2013LaplacianGraph,JZ2017}. Motivated by this, in this paper we focus on investigating the zero forcing number of graph powers, with a main focus on deriving spectral bounds which only use the eigenvalues of the base graph $G$. 

Zero forcing is a propagation process on a graph where the vertices are initially partitioned into two sets of black and white vertices. A white vertex is colored black (\emph{forced}) if it is the unique white neighbor of a black vertex. The minimum number of initial black vertices needed to force all vertices of a graph $G$ is called the \emph{zero forcing number}. The zero forcing number was formally introduced in \cite{AIM} as an upper bound for the minimum rank, a connection which was also established in \cite{A}. The study of the zero forcing number is also motivated by the investigation of quantum networks, influence in social networks and power dominating sets, see e.g. \cite{burgarth2009local} and~\cite{dean2011power}. The positive semidefinite zero forcing number was introduced in \cite{barioli2010} as a variant of the zero forcing number. The maximum positive semidefinite nullity problem (equivalently, the minimum positive semidefinite rank problem) for a graph $G$ is to determine the largest possible nullity (smallest possible rank) over all real positive semidefinite matrices described by the graph. The motivation of the positive semidefinite zero forcing is to bound the maximum positive semidefinite nullity.

This paper introduces the first systematic study of the zero forcing number of the $k$-th graph power $Z(G^k)$ (and some variants of zero forcing such as positive semidefinite zero forcing). While the zero forcing number is generally non-monotone under edge addition or deletion \cite{EHHLR2012}, we show that it behaves monotonically with respect to taking graph powers. Some results on graph powers are known from the matrix side: the minimum rank of powers of paths was determined in~\cite{AGKSMY2012}, and powers of cycles are consecutive circulants, whose minimum rank was determined in~\cite{DM2016}. In this direction, we determine the exact zero forcing numbers for powers of paths and cycles, and establish upper bounds for powers of spiders and grids. Furthermore, we derive new spectral lower bounds for $Z(G^k)$ using polynomials of the base graph's adjacency matrix, model the optimal bound polynomial choice via linear programming techniques, and evaluate the bounds sharpness through computational experiments.

\section{Preliminaries}\label{sec:preliminaries}

A graph $G=(V(G),E(G))$ consists of a set of vertices $V(G)$ and a set of edges $E(G)$, which are two-element subsets of the vertex set. The edge $\{v,w\}$ is denoted $vw$. Two vertices $v$ and $w$ are \emph{adjacent} or \emph{neighbors} if $vw \in E(G)$. We denote this by $v\sim w$. The \emph{neighborhood} of a vertex is $N(v)=\{w:vw \in E(G)\}$. The \emph{closed neighborhood} of $v$ is $N(v) \cup \{v\}$. 

Given a positive integer $t$, the neighborhood of $v$ in $G^t$ will be denoted $N^t(v)$. The \emph{distance} $\dist_G(v,w)$ between two vertices $v,w\in V(G)$ is the length of a shortest $v$--$w$ path in $G$, with $\dist_G(v,w)=\infty$ if no such path exists, and $\diam(G) = \max_{v,w\in V(G)}\dist_G(v,w)$ denotes the \emph{diameter} of $G$. Thus $v\sim w$ in $G^k$ if and only if $1\leq \dist_G(v,w)\leq k$. We write $\delta(G)$ for the minimum degree of $G$, $P_n$ for the path on $n$ vertices, $C_n$ for the cycle on $n$ vertices, and $G\mathbin{\square} H$ for the Cartesian product of $G$ and $H$.

As mentioned, zero forcing is a propagation process on a graph where the vertices are initially partitioned into two sets of black and white vertices. A white vertex is colored black (\emph{forced}) if it is the unique white neighbor of a black vertex. The minimum number of initial black vertices needed to force all vertices of a graph $G$ is called the \emph{zero forcing number}. 

The \emph{positive semidefinite color change rule} is as follows: let $G$ be a graph and $B$ be the set consisting of all the black vertices of $G$. Let $W_1,\dots, W_p$ be the set of vertices of the $p$ components of $G\setminus B$ with $p\geq 1$. If $u\in B$ and $w$ is the only white neighbor of $u$ in the graph induced by $V (W_i \cup B)$, then change the color of $w$ to black. A \emph{positive zero forcing set} $S_p$ of $G$, is a subset of vertices of $G$ such that if initially the vertices in $S_p$ are colored black and the remaining vertices are colored white, then the derived set of $G$ is $V(G)$. The \emph{positive zero forcing number} of $G$, denoted by $Z_{+}(G)$, is then defined to be the smallest size of a positive zero forcing set of $G$.

Throughout, a \emph{chronological list of forces} of a zero forcing set $B$ is the ordered list of the forces performed, in the order in which they were performed, and the \emph{derived set} of $B$ is the set of black vertices obtained once no further force is possible. We will repeatedly use the following elementary observation.

\begin{observation}\label{obs:atmostonce}
    In any chronological list of forces, every vertex performs at most one force. Indeed, immediately after $u$ forces its unique white neighbor, $u$ has no white neighbor left, and vertices never turn white again.
\end{observation}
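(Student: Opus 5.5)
The statement to justify is Observation~\ref{obs:atmostonce}: in a chronological list of forces, each vertex $u$ appears as the forcing vertex at most once. The plan is a direct argument from the color change rule together with the monotonicity of the black set. No earlier results are needed.

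\emph{Step 1: black vertices stay black.} Let $B_0\subseteq B_1\subseteq\cdots$ be the successive black sets, where $B_{i+1}=B_i\cup\{w_i\}$ after the $i$-th force $u_i\to w_i$. This nesting holds because the rule only ever recolors a white vertex black and never recolors anything white.

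\emph{Step 2: a vertex has no white neighbor after it forces.} Suppose $u$ performs the force $u\to w$ at step $i$. By the rule, $w$ is the unique neighbor of $u$ in $V(G)\setminus B_i$. Hence every neighbor of $u$ lies in $B_i\cup\{w\}=B_{i+1}$. By Step~1, every neighbor of $u$ lies in $B_j$ for all $j\ge i+1$.

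\emph{Step 3: conclusion.} A later force $u\to w'$ at step $j>i$ would require $w'$ to be a white neighbor of $u$, that is, $w'\in N(u)\setminus B_j$. By Step~2 this set is empty, a contradiction. So $u$ forces at most once.

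For the positive semidefinite rule the same argument shows that $u$ can never again force within the \emph{same} white component. Across different components, however, $u$ may legitimately force several times, so the observation is stated for the standard rule.

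The only point needing care is Step~2. There one must use that uniqueness refers to all white neighbors of $u$ at the moment of the force, so that after recoloring $w$ none remain. There is no real obstacle beyond this.
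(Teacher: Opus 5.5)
Your proof is correct and is the paper's argument written out in full: after $u$ forces its unique white neighbor, every neighbor of $u$ is black, and since the black set only grows, $u$ never has a white neighbor again. Your side remark on the positive semidefinite rule agrees with the paper's note that a vertex may force into several components, which is why the observation is stated for the standard rule.
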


In the positive semidefinite setting, a \emph{single force} is one application of the positive semidefinite color change rule into one fixed component of $G-B$. A black vertex may force into several components at once, but the individual forces can always be linearly ordered, since the vertex forces into each component independently of the others.

We also fix some algebraic notation. We write $\mathbb{R}_k[x]$ for the set of real polynomials of degree at most $k$, $\vecchi_S\in\{0,1\}^{V}$ for the characteristic vector of a subset $S\subseteq V$, and $\j$ for the all-one vector. For a connected graph $G$ on $n$ vertices with adjacency matrix $A$, the \emph{Perron eigenvector} $\vecnu$ is the entry-wise positive eigenvector of $A$ associated with the largest eigenvalue $\lambda_1$, normalized so that $\norm{\vecnu}{2}=1$; if $G$ is $d$-regular, then $\lambda_1=d$ and $\vecnu = \j/\sqrt{n}$. For an eigenvalue $\mu$ of $A$, we denote by $\operatorname{mult}_A(\mu)$ its multiplicity.

Finally, $\mathcal{S}(G)$ denotes the set of real symmetric $n\times n$ matrices $X=(x_{uv})$ whose off-diagonal support is described by $G$, that is, such that for $u\neq v$ we have $x_{uv}\neq 0$ if and only if $uv\in E(G)$; the diagonal entries of $X$ are unconstrained. The \emph{maximum nullity} of $G$ is $M(G) = \max\{\operatorname{null}(X): X\in \mathcal{S}(G)\}$ and the \emph{minimum rank} of $G$ is $\operatorname{mr}(G) = \min\{\operatorname{rank}(X):X\in\mathcal{S}(G)\} = \abs{V(G)} - M(G)$. The zero forcing number bounds the maximum nullity from above:
\begin{equation}\label{eq:M-leq-Z}
    M(G) \leq Z(G) \qquad \text{for every graph } G,
\end{equation}
see \cite{AIM}. We will also use the elementary lower bound $Z(G)\geq \delta(G)$ from \cite{AIM}.

\section{Monotonicity and Exact Values}\label{sec:monotonicity}

While it is known that the zero forcing number is not a monotone parameter when adding or removing an edge \cite{EHHLR2012}, the following results show that when considering graph powers, the zero forcing number behaves nicely. In fact, the result that zero forcing number is monotone increasing with respect to the power of the graph is true in a more general setting. 

Amos et al. proposed \emph{$k$-forcing} in \cite{ACDP2015} as the following generalization of the zero forcing game. Note that we will refer to this as $b$-forcing to avoid confusion with the power of the graph. Assume the vertices of a graph are colored in two colors, say white and black. Iteratively apply the following color change rule: if $u$ is a black vertex with at most $b$ white neighbors, then change the color of all the neighbors of $u$ to black. When no more color changes are possible, if all vertices are black, the original set of black vertices is a $b$-forcing set of $G$. The original zero forcing is $1$-forcing under this definition. The minimum cardinality of a $b$-forcing set of the graph $G$ is the $b$-forcing number of $G$, denoted $Z_b(G)$.

\begin{theorem}\label{thm:monotonicity-k-force}
    Let $G$ be a graph and $b \geq 1$. Then for $k \geq 1$, $Z_b(G^k) \leq Z_b(G^{k+1})$.  
\end{theorem}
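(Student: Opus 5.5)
The plan is to show directly that every $b$-forcing set of $G^{k+1}$ is also a $b$-forcing set of $G^k$, by simulating the forcing process of $G^{k+1}$ inside $G^k$. Two facts drive the argument. First, the color change rule is monotone in the black set: if a vertex has at most $b$ white neighbors with respect to some black set, it still does with respect to any larger black set. Second, $N^k(u)\subseteq N^{k+1}(u)$, and if $x\in N(u)$ then every vertex within distance $k$ of $x$ is within distance $k+1$ of $u$. That is, $N^k(x)\cup\{x\}\subseteq N^{k+1}(u)\cup\{u\}$ for every neighbor $x$ of $u$ in $G$.

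Fix a $b$-forcing set $S$ of $G^{k+1}$ and a chronological list of forces $u_1,u_2,\dots$ in $G^{k+1}$. Let $B_t$ be the black set in $G^{k+1}$ after the $t$-th force, with $B_0=S$. I will show by induction on $t$ that there is a sequence of valid $b$-forces in $G^k$, starting from $S$, whose resulting black set $B'_t$ satisfies $B_t\subseteq B'_t$. The base case $t=0$ is trivial.

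For the inductive step, let $u=u_{t+1}$. In $G^{k+1}$, $u$ is black and the set $W=N^{k+1}(u)\setminus B_t$ has $|W|\le b$. Since $u\in B'_t$ and $N^k(u)\setminus B'_t\subseteq W$, the vertex $u$ may force in $G^k$, which blackens all of $N^k(u)$. The only vertices of $B_{t+1}$ possibly still white are those $w\in W$ with $\dist_G(u,w)=k+1$. For such a $w$, take a shortest $u$--$w$ path in $G$ and let $x$ be its second vertex. Then $x\in N^k(u)$ (as $k\ge1$), so $x$ is now black, and $\dist_G(x,w)=k$, so $w\in N^k(x)$. By the second fact, the white neighbors of $x$ in $G^k$ at this moment lie in $N^{k+1}(u)\cup\{u\}$ minus the current black set, hence in $W$. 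So there are at most $b$ of them, and $x$ may force, blackening $w$. Repeating this for each remaining such $w$ (with monotonicity guaranteeing validity throughout) gives $B_{t+1}\subseteq B'_{t+1}$.

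At the end $V=B_T\subseteq B'_T$, so $S$ is a $b$-forcing set of $G^k$ and $Z_b(G^k)\le |S|$. Taking $S$ of minimum size gives the theorem. The main obstacle is the vertices at distance exactly $k+1$ from the forcing vertex, which $u$ cannot reach in $G^k$. The key point is the containment $N^k(x)\cup\{x\}\subseteq N^{k+1}(u)\cup\{u\}$, which lets the relay vertex $x$ inherit the bound of $b$ white neighbors from $u$.
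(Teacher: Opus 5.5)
Your proof is correct and follows the same route as the paper. Each force $u\rightarrow W$ of $G^{k+1}$ is simulated in $G^k$ by letting $u$ force $W\cap N^k(u)$, and then using $G$-neighbors of $u$ on shortest paths as relays for the vertices at distance exactly $k+1$. Your invariant $B_t\subseteq B'_t$ and the containment $N^k(x)\cup\{x\}\subseteq N^{k+1}(u)\cup\{u\}$ make explicit, and slightly correct, details the paper leaves implicit: the paper writes $N^k(u_1)\subseteq N^{k+1}(v_1)$, which overlooks that $v_1\in N^k(u_1)$ (harmless, since $v_1$ is black).
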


\begin{proof}
    Let $B$ be a $b$-forcing set for $G^{k+1}$. Let $\mathcal{F}_{k+1}$ be a chronological list of forces for $B$ on $G^{k+1}$. Note that each force consists of a single vertex forcing a set of vertices of size at most $b$. We will show the result by induction. As a basis for induction, let $v_1 \rightarrow W_1=\{w_1, w_2, \dots, w_t\}$ be the first force for $B$ in $G^{k+1}$. Let $W_1'$ be $W_1 \cap N^{k}(v_1)$. Since the only white neighbors of $v_1$ in $G^k$ are the vertices in $W_1'$, $v_1$ forces $W_1'$ in $G^k$. If $x \in W_1\setminus W_1'$, then $\dist_G(x,v_1)=k+1$ and there exists $u_1 \in N(v_1)$ such that $x \in N^k(u_1)$. Since $N^k(u_1) \subseteq N^{k+1}(v_1)$, $u_1$ can force its white neighbors black. Continue to choose neighbors of $v_1$ to force any additional white vertices in $W_1$ until all of $W_1$ is forced black. 

    Inductively proceed by following the order of the forces in $\mathcal{F_{k+1}}$. Assuming that the first $j-1$ forces were performed, at step $j$, proceed to force $W_j$ according to the procedure of the first force. At the end of the process all vertices of $G^k$ are black, and therefore $B$ is a $b$-forcing set for $G^k$.
\end{proof}

\begin{corollary}\label{cor:monotone-Z}
    Let $G$ be a graph and let $k \geq 1$. Then $Z(G^k) \leq Z(G^{k+1})$.
\end{corollary}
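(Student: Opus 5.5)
The plan is to obtain Corollary~\ref{cor:monotone-Z} as the special case $b=1$ of Theorem~\ref{thm:monotonicity-k-force}, so the only real task is to confirm that $1$-forcing coincides with ordinary zero forcing, i.e.\ that $Z_1(G)=Z(G)$ for every graph $G$. Once that identification is in place, applying Theorem~\ref{thm:monotonicity-k-force} with $b=1$ gives $Z(G^k)=Z_1(G^k)\leq Z_1(G^{k+1})=Z(G^{k+1})$ for all $k\geq 1$.

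To verify $Z_1=Z$, I would compare the color change rules directly. In $1$-forcing, a black vertex $u$ with at most one white neighbor turns all its neighbors black. If $u$ has no white neighbor, nothing changes. If $u$ has exactly one white neighbor $w$, then $w$ turns black, which is precisely the standard zero forcing rule. So the two rules produce the same sequences of color changes and the same derived sets, and a set is a $1$-forcing set exactly when it is a zero forcing set. Taking minimum cardinalities gives $Z_1(G)=Z(G)$, as the paper already notes when it says original zero forcing is $1$-forcing.

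There is essentially no obstacle here. The only point needing care is that Theorem~\ref{thm:monotonicity-k-force} is stated for all $b\geq 1$, so $b=1$ is allowed, and that its argument does not depend on $b$ beyond the size bound on the forced sets. As a sanity check, when $b=1$ each $W_j$ in that proof is a singleton $\{w_j\}$. Either $w_j\in N^k(v_j)$ and $v_j$ forces it in $G^k$, or $\dist_G(v_j,w_j)=k+1$ and a neighbor $u_j$ of $v_j$ on a shortest path supplies the force. For the corollary itself, though, invoking the theorem suffices.
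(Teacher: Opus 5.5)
Your proposal is correct and follows the paper's route: the paper states this corollary immediately after Theorem~\ref{thm:monotonicity-k-force} as its $b=1$ case. It relies only on the remark that ordinary zero forcing is $1$-forcing, and your explicit check that $Z_1=Z$ makes that identification precise.
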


Similar to standard zero forcing and $k$-forcing, monotonicity also holds for positive semidefinite zero forcing. 

\begin{theorem}\label{thm:monotonicity-PSD}
    Let $G$ be a graph and $k \geq 1$. Then $Z_{+}(G^{k}) \leq Z_{+}(G^{k+1})$.
\end{theorem}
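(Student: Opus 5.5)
The plan is to imitate the proof of Theorem~\ref{thm:monotonicity-k-force}. Let $B$ be a positive zero forcing set of $G^{k+1}$, and fix a chronological list of single forces $u_1\to w_1,\dots,u_m\to w_m$ for $B$ in $G^{k+1}$. Write $S_j=B\cup\{w_1,\dots,w_j\}$ for the set of black vertices after the first $j$ forces. By induction on $j$ I will show that, starting from $B$ in $G^k$ and applying the positive semidefinite color change rule, we can reach a black set $S'_j\supseteq S_j$. Taking $j=m$ then gives $S'_m=V(G)$, so $B$ is a positive zero forcing set of $G^k$ and $Z_+(G^k)\le Z_+(G^{k+1})$.

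The structural fact that makes this work concerns components. Suppose $S\subseteq S'$. Since $G^k$ is a spanning subgraph of $G^{k+1}$, the graph $G^k-S'$ is a subgraph of $G^{k+1}-S$. Hence every component $W'$ of $G^k-S'$ lies inside a single component $W$ of $G^{k+1}-S$. I will prove this refinement first, since the rest of the argument uses it repeatedly.

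For the inductive step, suppose we hold $S'\supseteq S=S_{j-1}$, and the next force in $G^{k+1}$ is $u\to w$. This means $w$ is the unique white neighbor of $u$ in $G^{k+1}[W\cup S]$, where $W$ is the component of $G^{k+1}-S$ containing $w$. If $w\in S'$ already, there is nothing to do. Otherwise, let $W'\subseteq W$ be the component of $G^k-S'$ containing $w$. There are two cases.
\begin{enumerate}
\item If $\dist_G(u,w)\le k$, then $u\in S\subseteq S'$ is black, and its white $G^k$-neighbors in $W'$ lie in $N^{k+1}(u)\cap W=\{w\}$. So $u$ forces $w$ in $G^k$.
\item If $\dist_G(u,w)=k+1$, pick a neighbor $x$ of $u$ in $G$ on a shortest $u$--$w$ path, so that $\dist_G(x,w)=k$.
\end{enumerate}

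The main obstacle is the second case: showing that $x$ is black, and that $x$ can perform the force in $G^k$. For blackness, $x\in N^{k+1}(u)$ and $x\ne w$. If $x$ were white in $S$, it would be adjacent to $w$ in $G^{k+1}$ and hence lie in the same component $W$. Then $x$ would be a second white neighbor of $u$ in $G^{k+1}[W\cup S]$, contradicting the force $u\to w$. So $x\in S\subseteq S'$. For the force itself, note that $N^k(x)\subseteq N^{k+1}(u)\cup\{u\}$ because $\dist_G(u,x)=1$. Therefore the white $G^k$-neighbors of $x$ in $W'\subseteq W$ are contained in $N^{k+1}(u)\cap W=\{w\}$, and $w$ is one of them since $\dist_G(x,w)=k$. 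Thus $x\to w$ is a valid positive semidefinite force in $G^k$ into the component $W'$, and we set $S'_j=S'\cup\{w\}$.

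This completes the induction. Together with the remark in the preliminaries that single forces can be linearly ordered, it shows that the derived set of $B$ in $G^k$ is $V(G)$.
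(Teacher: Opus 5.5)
Your proposal is correct and follows essentially the same route as the paper's proof. Each single force $u\to w$ of $G^{k+1}$ is simulated in $G^k$, either directly when $\dist_G(u,w)\le k$, or through the neighbor $x$ of $u$ on a shortest $u$--$w$ path; $x$ must already be black, since otherwise it would be a second white neighbor of $u$ in the component of $w$. Your version is in fact slightly more careful than the paper's: you re-derive the blackness of $x$ at every step rather than only in the base case, you state the component refinement for a general black set $S'\supseteq S$, and you use the correct inclusion $N^k(x)\subseteq N^{k+1}(u)\cup\{u\}$ together with $u\notin W$.
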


\begin{proof}
    Suppose $B$ is a set of currently colored vertices.  For $k \geq 1$, let $W^k_1, W^k_2,\dots,W^k_t$ be the connected components of $G^{k}-B$ and define $W^{k}(x)$ to be the component of $G^{k}-B$ containing the vertex $x$. If $w$ and $x$ are in the same connected component of $G^{k}-B$, then they are also in the same connected component of $G^{k+1}-B$ since all of the edges in $G^{k}$ are also present in $G^{k+1}$. Therefore, for $w \notin B$, $W^{k}(w) \subseteq W^{k+1}(w)$.

    Let $B_0$ be an initial set of black vertices that is a PSD zero forcing set of $G^{k+1}$, and let $B_i$ be the set of black vertices after the $i$-th forcing step. Let $\mathcal{F}$ be a chronological list of single forces for $B_0$ on $G^{k+1}$. Note that a vertex $v \in B_i$ may force into more than one connected component of $G^k-B_i$, but there is no loss of generality in ordering the individual forces since the $v$ forces into each component independently of the other components. The goal is to inductively show that $B_0$ is a PSD zero forcing set for $G^k$.

    As a basis for induction, consider the first force in $\mathcal{F}$. For some $v \in B_0$ and $w \notin B_0$, $v$ forces $w$ black in $G^{k+1}$. This implies $w$ is the only white neighbor of $v$ in $B_0 \cup W^{k+1}(w)$. Since $W^{k}(w) \subseteq W^{k+1}(w)$, the only possible white neighbor of $v$ in $W^{k}(w)$ is $w$. If $w$ is a neighbor of $v$ in $G^{k}$, then $v \rightarrow w$ is a valid force in $G^{k}$. If $w \notin N^k(v)$, then $\dist_G(v,w)=k+1$. Let $u$ be a neighbor of $v$ in $G$ on a path of length $k+1$ from $v$ to $w$ in $G$. Then $u \in B_0$: indeed, if $u$ were white, then $\dist_G(u,w) = k$ would give $u\sim w$ in $G^{k+1}$ and hence $u \in W^{k+1}(w)$, so that $u$ would be a white neighbor of $v$ in $B_0\cup W^{k+1}(w)$ distinct from $w$, contradicting the validity of the force $v\rightarrow w$ in $G^{k+1}$. Moreover $N^k(u) \subseteq N^{k+1}(v)$ and $w$ is the only white neighbor of $u$ in $B_0 \cup W^{k}(w)$. Therefore $u$ forces $w$ black.

    Assume that at forcing step $m$, all of the vertices forced in the first $m-1$ steps have been forced in $G^{k}$. Then at step $m$, $v_m \rightarrow w_m$ in $G^{k+1}$ which implies $w_m$ is the only white neighbor of $v_m$ in $W^{k+1}(w_m)$. If $\dist_G(v_m,w_m) \leq k$, then $w_m$ is the only white neighbor of $v_m$ in $G^{k}$, and $v_m$ forces $w_m$ to turn black. If $w_m$ is not a neighbor of $v_m$ in $G^{k}$, then there exists $u_m \in N(v_m)$ on a shortest path from $v_m$ to $w_m$ in $G$. As in the base case, $u_m$ is black and $w_m$ is its only white neighbor in $W^{k}(w_m)$. Thus $u_m$ forces $w_m$ black. Since $B_0$ is a PSD zero forcing set for $G^{k+1}$, all vertices are black at the end of the forcing process described by $\mathcal{F}$ in $G^{k+1}$. By induction, all vertices in $G^{k}$ are also forced black using $B_0$. Therefore, $B_0$ is also a PSD zero forcing set of $G^{k}$.
\end{proof}

Regarding consecutive graph powers, the automated conjecturing program TxGraffiti \cite{Dprivate} produced the following inequality as a candidate conjecture:
\begin{equation}\label{conj:TxGraffiti}
    Z(G^{t+1}) \leq 2Z(G^t)-1.
\end{equation}
However, the following simple graph example shows that this conjecture is not true.

\begin{figure}[!h]
    \centering
    \scalebox{0.5}{
        \begin{tikzpicture}[scale=2,every node/.style={draw,shape=circle,outer sep=2pt,inner sep=1pt,minimum size=.2cm}]		
            \node[fill=none]  (1) at (-4,1) {};
            \node[fill=none]  (2) at (-2,1) {};
            \node[fill=none]  (3) at (0,1) {};
            \node[fill=none]  (4) at (2,1) {};
            \node[fill=none]  (5) at (4,1) {};
            \node[fill=none]  (6) at (4,-1) {};
            \node[fill=none]  (7) at (2,-1) {};
            \node[fill=none]  (8) at (0,-1) {};
            \node[fill=none]  (9) at (-2,-1) {};
            \node[fill=none]  (10) at (-4,-1) {};
            
            \draw (1)--(2)--(3)--(4)--(5)--(6)--(7)--(8)--(9)--(10)--(1);
            \draw (1)--(9)--(3)--(7)--(5);
            \draw (2)--(9);
            \draw (3)--(8);
            \draw (4)--(7);
        \end{tikzpicture}
    }
    \caption{A counterexample to the TxGraffiti conjecture \eqref{conj:TxGraffiti}.}\label{fig:counterexample-TxGraffiti}
\end{figure}

Indeed, let $G$ be the graph in Figure \ref{fig:counterexample-TxGraffiti}, that is, $G$ is a graph on two parallel paths, and $Z(G)=2$. However, $G^2$ has minimum degree $\delta(G^2)=5$ and since $Z(H)\geq \delta(H)$ for every graph $H$ \cite{AIM}, we obtain $Z(G^2) \geq 5 > 3 = 2Z(G)-1$. This example generalizes to two parallel paths of arbitrary length. 

\begin{remark}
    Let $P_5$ be the path on the five vertices $v_1,\dots,v_5$, where $v_i\sim v_{i+1}$ for $i=1,2,3,4$. The second power $P^2_5$ has zero forcing number two, as $\{v_1, v_2\}$ is a zero forcing set. However, the graph $P^2_5 - \{v_2,v_4\}$ has zero forcing number three. This means that the zero forcing number of graph powers is not monotone under edge removals. 
\end{remark}

By Corollary~\ref{cor:monotone-Z} the sequence $Z(G^k)$ is non-decreasing in $k$. The gap between two consecutive terms is, however, not bounded by any constant.

\begin{proposition}\label{prop:gap-unbounded}
    For every $m \geq 1$ and $k\geq 1$ there is a graph $G$ with $Z(G^{k+1})-Z(G^{k}) = m$. Indeed, if $G$ is the disjoint union of $m$ paths $P_\ell$ with $\ell \geq k+3$, then $Z(G^k) = mk$.
\end{proposition}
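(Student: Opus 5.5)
The plan is to reduce everything to a single path. The zero forcing number is additive over disjoint unions, since forces never cross components. Also $(mP_\ell)^k$ is the disjoint union of $m$ copies of $P_\ell^k$. Hence it suffices to prove $Z(P_\ell^k)=k$ whenever $\ell\geq k+3$. Once that is established, $Z(G^k)=mk$. Since $\ell\geq k+3 > (k+1)+1$, the same statement applied to $k+1$ needs $\ell\geq k+4$. So I would take $\ell\geq k+4$, for instance $\ell$ large, for the gap claim: $Z(G^{k+1})-Z(G^k)=m(k+1)-mk=m$. If the statement is meant literally with $\ell\ge k+3$ for both powers, I would check the case $\ell=k+3$ for power $k+1$ separately. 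There $\ell=(k+1)+2$, and the same argument below still works provided the minimum degree of $P_\ell^{k+1}$ is $k+1$; this holds as long as $\ell\geq k+2$.

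For the upper bound on a path $v_1,\dots,v_\ell$, take $B=\{v_1,\dots,v_k\}$. In $P_\ell^k$ we have $N^k(v_1)=\{v_2,\dots,v_{k+1}\}$, so $v_1$ has the unique white neighbor $v_{k+1}$ and forces it. Inductively, once $v_1,\dots,v_{j+k-1}$ are black, the vertex $v_j$ has closed $k$-neighborhood contained in $\{v_1,\dots,v_{j+k}\}$. Its unique white neighbor is therefore $v_{j+k}$, which it forces. This continues until $v_\ell$ is black, so $Z(P_\ell^k)\leq k$.

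For the lower bound, use $Z(H)\geq\delta(H)$ from \cite{AIM}. In $P_\ell^k$ the endpoint $v_1$ has degree $\min(k,\ell-1)=k$, since $\ell\geq k+1$. Every other vertex $v_i$ has degree at least $k$, because it sees $\min(k,i-1)+\min(k,\ell-i)\geq k$ vertices when $\ell\geq k+1$. Thus $\delta(P_\ell^k)=k$ and $Z(P_\ell^k)\geq k$.

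I expect no real obstacle. The only care needed is the hypothesis on $\ell$. It must guarantee that $P_\ell^k$ and $P_\ell^{k+1}$ are not complete, so that the minimum-degree bound and the forcing chain give exactly $k$ and $k+1$ rather than $\ell-1$. I would state this inequality $\ell\geq k+2$ explicitly, noting it holds since $\ell\geq k+3$. Additivity of $Z$ over components is standard, and I would justify it in one line: a zero forcing set restricts to a zero forcing set of each component.
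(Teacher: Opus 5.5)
Your proof is correct and follows the paper's argument: you reduce to a single path via additivity over components, obtain $Z(P_\ell^k)\geq k$ from $\delta(P_\ell^k)=k$, and obtain $Z(P_\ell^k)\leq k$ from the chain in which $v_j$ forces $v_{j+k}$ starting from $B=\{v_1,\dots,v_k\}$. The detour through $\ell\geq k+4$ and the concern about completeness are unnecessary, since both of your arguments work verbatim for $\ell\geq k+1$ (even $P_{k+1}^k=K_{k+1}$ has $Z=k$); the hypothesis $\ell\geq k+3$ therefore already covers the $(k+1)$-st power, which is why the paper simply proves $Z(P_\ell^k)=k$ for $\ell\geq k+2$ and applies this to both $k$ and $k+1$.
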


\begin{proof}
    Zero forcing is additive over connected components and $(G\cup H)^k = G^k\cup H^k$, so it suffices to prove $Z(P_\ell^k) = k$ for $\ell \geq k+2$. The minimum degree of $P_\ell^k$ is $k$, so $Z(P_\ell^k) \geq k$. Conversely the first $k$ vertices force: with $v_1,\dots,v_k$ black, $v_1$ has $v_{k+1}$ as its unique white neighbor, and inductively $v_j$ forces $v_{j+k}$ for $j = 1,2,\dots,\ell-k$.
\end{proof}

We now determine, or bound, the zero forcing number of the powers of some natural graph families. For paths and cycles the answer can be read off from the literature on the minimum rank of graph powers.

\begin{proposition}\label{prop:paths}
    Let $\ell \geq k+2$. Then $Z(P_\ell^k) = M(P_\ell^k) = k$.
\end{proposition}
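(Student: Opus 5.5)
We prove $Z(P_\ell^k)=M(P_\ell^k)=k$ through the chain
\[
k \le M(P_\ell^k) \le Z(P_\ell^k) \le k .
\]
The upper bound $Z(P_\ell^k)\le k$ and the middle inequality are already available. The proof of Proposition~\ref{prop:gap-unbounded} shows that $\{v_1,\dots,v_k\}$ is a zero forcing set of $P_\ell^k$ whenever $\ell\ge k+2$: each $v_j$ forces $v_{j+k}$ in turn. Inequality~\eqref{eq:M-leq-Z} gives $M(P_\ell^k)\le Z(P_\ell^k)$. So everything reduces to the lower bound $M(P_\ell^k)\ge k$, that is, $\operatorname{mr}(P_\ell^k)\le \ell-k$. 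This also recovers the bound $Z\ge k$ obtained from $\delta(P_\ell^k)=k$.

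The minimum rank of powers of paths was determined in \cite{AGKSMY2012}, so one option is to cite it. For a self-contained argument, I would construct a matrix $X\in\mathcal S(P_\ell^k)$ of rank $\ell-k$. Such an $X$ is a symmetric banded matrix of bandwidth $k$ whose entries on diagonals $1,\dots,k$ are all nonzero and whose entries beyond the band are zero.

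I would try $X=U^{\top}U$, where $U$ is the $(\ell-k)\times \ell$ matrix whose row $i$ is supported on columns $i,\dots,i+k$ with generic nonzero entries. Then $\operatorname{rank}X\le \ell-k$, so $\operatorname{null}X\ge k$. The entry $x_{uv}$ equals $\sum_i U_{iu}U_{iv}$, summed over rows whose windows contain both $u$ and $v$. If $|u-v|>k$, no window of length $k+1$ contains both, so $x_{uv}=0$.

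If $1\le |u-v|\le k$, at least one window contains both $u$ and $v$, and this is where the hypothesis $\ell\ge k+2$ is used. Then $x_{uv}$ is a nonzero polynomial in the entries of $U$, since it is a sum of distinct monomials. Choosing the entries of $U$ off the finite union of these zero sets (for instance, generic or algebraically independent reals) makes every such entry nonzero. This shows $X\in\mathcal S(P_\ell^k)$.

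The main obstacle is this generic-nonvanishing check. The approach is to verify that for every pair at distance $1,\dots,k$ some row window covers both endpoints. This holds because the windows $\{i,\dots,i+k\}$ for $i=1,\dots,\ell-k$ cover every pair of vertices within distance $k$. Finally, I would observe that symmetry is automatic, since $X=U^{\top}U$, and that the diagonal entries are unconstrained, which completes the chain of inequalities.
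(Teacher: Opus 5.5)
Your proof is correct; it differs from the paper's mainly in being self-contained. The paper takes $Z(P_\ell^k)=k$ from Proposition~\ref{prop:gap-unbounded} and gets $M(P_\ell^k)=k$ by citing $\operatorname{mr}(P_\ell^k)=\ell-k$ from \cite{AGKSMY2012}. Its alternative sketch obtains $M\le k$ from a rank lower bound: rows $1,\dots,\ell-k$ and columns $k+1,\dots,\ell$ form a triangular submatrix whose diagonal is the nonzero outermost band. It then simply asserts that rank $\ell-k$ is attained. You reverse the roles. $M\le k$ comes for free from $M\le Z\le k$, and you supply exactly the ingredient the paper leaves unproved: an explicit $X=U^{\top}U\in\mathcal{S}(P_\ell^k)$ of rank at most $\ell-k$. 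Your description of $\mathcal{S}(P_\ell^k)$, with every diagonal $1,\dots,k$ entrywise nonzero and not just the outermost one, is the right one, and it is what makes attainability need an argument. Two minor points. First, the genericity step can be replaced by taking all band entries of $U$ positive, since then every $x_{uv}$ with a covering window is a sum of positive terms. Second, the covering step only needs $\ell\ge k+1$ (so that $U$ has a row), so it is not really where $\ell\ge k+2$ is used. As a bonus, $U^{\top}U$ is positive semidefinite, so your construction also gives $M_{+}(P_\ell^k)\ge k$ for the maximum positive semidefinite nullity. Combined with $M_{+}\le Z_{+}\le Z$ \cite{barioli2010}, this yields $M_{+}(P_\ell^k)=Z_{+}(P_\ell^k)=k$ as well.
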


\begin{proof}
    The equality $Z(P_\ell^k)=k$ was shown in the proof of Proposition~\ref{prop:gap-unbounded}. For the maximum nullity, $\operatorname{mr}(P_\ell^k) = \ell-k$ by \cite{AGKSMY2012}, whence $M(P_\ell^k) = \ell-\operatorname{mr}(P_\ell^k) = k$. Alternatively, $\mathcal{S}(P_\ell^k)$ consists precisely of the symmetric banded matrices of bandwidth $k$ whose outermost diagonal has no zero entry, and any such matrix has rank at least $\ell-k$, with equality attained.
\end{proof}

\begin{proposition}\label{prop:cycles}
    Let $n \geq 2k+1$. Then $Z(C_n^k) = M(C_n^k) = 2k$.
\end{proposition}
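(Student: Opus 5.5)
The plan is to sandwich both quantities between $2k$ and $2k$. By \eqref{eq:M-leq-Z} we have $M(C_n^k)\le Z(C_n^k)$, so it suffices to prove the upper bound $Z(C_n^k)\le 2k$ and the lower bound $M(C_n^k)\ge 2k$. (Note that $Z(C_n^k)\ge\delta(C_n^k)=2k$ is immediate, since for $n\ge 2k+1$ every vertex has exactly $2k$ neighbours in $C_n^k$. This does not, however, give the lower bound on $M$.)

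\textbf{Upper bound for $Z$.} Label the cycle $v_0,\dots,v_{n-1}$ with indices mod $n$, and colour $B=\{v_0,\dots,v_{2k-1}\}$ black. The neighbourhood of $v_k$ in $C_n^k$ is $\{v_0,\dots,v_{2k}\}\setminus\{v_k\}$, so its unique white neighbour is $v_{2k}$, and $v_k\to v_{2k}$. Inductively, once $v_0,\dots,v_{j+k-1}$ are black, $v_j$ has $v_{j+k}$ as its only possible white neighbour (its lower neighbours $v_{j-k},\dots,v_{j-1}$ are black, wrapping around into the initial block when $j<k$). Hence $v_j\to v_{j+k}$ for $j=k,\dots,n-k-1$, and all of $C_n^k$ becomes black. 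When $n=2k+1$ the graph is complete and the claim is trivial anyway.

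\textbf{Lower bound for $M$.} Here I would either cite the literature on consecutive circulants \cite{DM2016}, which gives $\operatorname{mr}(C_n^k)=n-2k$, or construct a matrix directly. For the direct construction, take the real circulant
\[
X=\sum_{|j|\le k} c_j P^j, \qquad c_{-j}=c_j,
\]
where $P$ is the cyclic shift. Its eigenvalues are $p(\omega^m)$ with $\omega=e^{2\pi i/n}$ and $p(z)=\sum_{|j|\le k}c_jz^j$. Equivalently, $X=q(P+P^{-1})$ for a real polynomial $q$ of degree $k$, and its eigenvalues are $q(2\cos(2\pi m/n))$. Choose $q$ with roots at $k$ distinct values $2\cos(2\pi m/n)$, $m=1,\dots,k$. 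Since $n\ge 2k+1$, these values are distinct and each has multiplicity two (from $m$ and $n-m$), so $\operatorname{null}(X)\ge 2k$.

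It remains to check that $X\in\mathcal{S}(C_n^k)$, i.e.\ that each coefficient $c_j$, $1\le j\le k$, is nonzero. This is the main obstacle. Expanding $q(P+P^{-1})$ in powers of $P$ gives $c_k\neq0$ automatically, but the intermediate coefficients could vanish. To handle this, I would use the freedom of choosing any $k$ of the available frequencies; a generic choice (or a small perturbation argument, since the coefficients are nonconstant polynomial functions of the chosen roots) avoids the zero set of finitely many polynomials. Failing that, I would fall back on \cite{DM2016}. Combining the two bounds gives $2k\le M(C_n^k)\le Z(C_n^k)\le 2k$.
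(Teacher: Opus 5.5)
Your main line is the paper's proof. The upper bound $Z(C_n^k)\le 2k$ uses the same forcing chain: from $2k$ consecutive black vertices, the vertex $k$ positions behind the black front forces the next one. Like the paper, you ultimately get $M(C_n^k)\ge 2k$ from \cite{DM2016}, and with that fallback the proof is complete. The paper closes the sandwich with $Z\ge\delta=2k$ and you with $M\le Z$, which makes no difference.

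The direct circulant construction you offer as an alternative does not work as argued. The roots of $q$ are not free parameters: they must be exactly $k$ of the finitely many values $2\cos(2\pi m/n)$, $1\le m<n/2$. So no genericity or perturbation argument is available, and the construction genuinely fails in some cases.

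To see why, note that a real symmetric circulant in $\mathcal{S}(C_n^k)$ is $q(A)$ with $\deg q=k$, and its eigenvalues are $q(2\cos(2\pi m/n))$. Nullity $2k$ therefore forces the roots of $q$ to be $k$ distinct doubly repeated eigenvalues of $C_n$. For $n=2k+2$ there are exactly $k$ of these, and the set is symmetric under $x\mapsto -x$. Hence $q$ is unique up to scaling and $q(-2)=(-1)^kq(2)$. Since only the eigenvalues $\pm 2$ survive, $q(A)_{uv}=\frac{q(2)}{n}\bigl(1+(-1)^{k+u-v}\bigr)$, which vanishes on pairs at distance $k-1\ge 1$ whenever $k\ge 2$. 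For $n=6$, $k=2$ this gives $q(x)=x^2-1$ and $X=A^2-I$, which is zero on the edges of the triangle-free $C_6$. This is the same failure as $\Lambda=\{1,-1\}$ in Example~\ref{ex:spider}.

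So for $n=2k+2$ no real symmetric circulant attains nullity $2k$, and a non-circulant matrix is needed. One example is the rank-$2$ Gram matrix of $k+1$ orthogonal pairs of unit vectors in $\mathbb{R}^2$, one pair per antipodal pair of vertices, with directions distinct modulo $\pi/2$.

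For odd $n$ your idea can be rescued by a specific, non-generic choice. Take the $k$ smallest eigenvalues, $m=\frac{n+1}{2}-k,\dots,\frac{n-1}{2}$. Then $z^kq(z+z^{-1})$ is a product over $2k$ consecutive $n$-th roots of unity. Its coefficients are unimodular multiples of the Gaussian binomials $\binom{2k}{s}_{\omega}$, which are nonzero since $2k<n$, so all $c_j\neq 0$. In general, though, your lower bound on $M$ rests on the citation, exactly as the paper's does.
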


\begin{proof}
    The graph $C_n^k$ is $2k$-regular, so $Z(C_n^k) \geq \delta(C_n^k) = 2k$ by \cite{AIM}. Conversely, label $V(C_n) = \{v_1,\dots,v_n\}$ cyclically and color $B = \{v_1,\dots,v_{2k}\}$ black. The neighbors of $v_{k+1+j}$ in $C_n^k$ are the vertices $v_{1+j},\dots,v_{2k+1+j}$ other than $v_{k+1+j}$ itself, so if $v_1,\dots,v_{2k+j}$ are black then $v_{2k+1+j}$ is the unique white neighbor of $v_{k+1+j}$. Performing these forces for $j = 0,1,\dots,n-2k-1$ colors the whole graph black, so $Z(C_n^k)\le 2k$. Finally, $C_n^k$ is the consecutive circulant $C_n(1,2,\dots,k)$, and $M(C_n^k) = 2k$ by \cite{DM2016}.
\end{proof}

Beyond paths and cycles the situation is less clear, and we content ourselves with upper bounds. For $m\geq 2$ and $\ell \geq 1$, let $S_m(\ell)$ denote the \emph{spider} obtained from $m$ paths on $\ell$ vertices by joining one endpoint of each of them to a new central vertex $c$; we write $v_{i,1},\dots,v_{i,\ell}$ for the vertices of the $i$-th leg, indexed by their distance to $c$. Note that $S_2(\ell) = P_{2\ell+1}$.

\begin{proposition}\label{prop:spider}
    Let $m \geq 2$, $k \geq 1$ and $\ell \geq k+1$. Then $Z\bigl(S_m(\ell)^k\bigr) \leq (m-1)k$.
\end{proposition}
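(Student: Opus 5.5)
The plan is to exhibit an explicit zero forcing set of size $(m-1)k$ for $S_m(\ell)^k$. Following the idea used for paths in Proposition~\ref{prop:gap-unbounded}, I would take
\[
B=\{v_{i,j} : 1\le i\le m-1,\ \ell-k+1\le j\le \ell\},
\]
that is, the $k$ outermost vertices of each of the first $m-1$ legs. Leg $m$ and the center $c$ start white.

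The first phase blackens legs $1,\dots,m-1$ from the tips inward. The key distance fact is that for $j\ge k+1$ we have $\dist(v_{i,j},c)=j>k$. Hence $N^k(v_{i,j})$ lies entirely inside leg $i$, namely it is $\{v_{i,j-k},\dots,v_{i,j+k}\}\setminus\{v_{i,j}\}$ intersected with the leg. I perform the forces $v_{i,j}\to v_{i,j-k}$ in decreasing order of $j$, for $j=\ell,\ell-1,\dots,k+1$. At the moment $v_{i,j}$ forces, all of $v_{i,j-k+1},\dots,v_{i,\ell}$ are already black, so $v_{i,j-k}$ is its unique white neighbor. The hypothesis $\ell\ge k+1$ guarantees that this phase is nonempty and ends with $v_{i,1}$ black, so every vertex of legs $1,\dots,m-1$ becomes black.

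The second phase handles the center and the start of leg $m$ via leg $1$. Recall $\dist(v_{1,s},v_{p,t})=s+t$ for $p\neq 1$, and $\dist(v_{1,s},c)=s$.
\begin{itemize}
\item First, $v_{1,k}$ has $c$ as its only neighbor outside leg $1$, since every $v_{p,t}$ with $p\neq 1$ is at distance at least $k+1$. So $v_{1,k}$ forces $c$.
\item Next, for $t=1,\dots,k-1$, the vertex $v_{1,k-t}$ sees exactly $v_{p,1},\dots,v_{p,t}$ on each other leg, together with $c$ and leg-$1$ vertices. By induction, the only one of these still white is $v_{m,t}$, so $v_{1,k-t}$ forces $v_{m,t}$.
\end{itemize}
After this phase, $c,v_{m,1},\dots,v_{m,k-1}$ are black.

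The third phase finishes leg $m$ as a path-power propagation. Set $u_0=c$ and $u_t=v_{m,t}$. For $t\ge 0$, every neighbor of $u_t$ in $S_m(\ell)^k$ lies either on legs $1,\dots,m-1$, which are all black, or is one of $u_0,\dots,u_{t+k}$. Once $u_0,\dots,u_{t+k-1}$ are black, $u_t$ therefore forces $u_{t+k}$. Running this for $t=0,1,\dots,\ell-k$ colors all of leg $m$, which gives $Z\bigl(S_m(\ell)^k\bigr)\le (m-1)k$.

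I expect the main obstacle to be the bookkeeping in the second phase. There one must verify that each $v_{1,k-t}$ really has a unique white neighbor, which requires the exact description of $N^k$ near the center across all legs and an order of forces in which $c$ is blackened first. The remaining phases are direct analogues of the path argument.
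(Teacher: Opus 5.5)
Your proposal is correct and follows the paper's proof essentially step for step. It uses the same set $B$ and the same tip-inward forcing on legs $1,\dots,m-1$, then $v_{1,k}\rightarrow c$ and $v_{1,k-t}\rightarrow v_{m,t}$ for $t=1,\dots,k-1$, and finally propagation along leg $m$; the only cosmetic difference is that you absorb the paper's separate step $c\rightarrow v_{m,k}$ into the last phase by setting $u_0=c$.
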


\begin{proof}
    Distances in $T = S_m(\ell)$ are given by $\dist_T(v_{i,j},v_{i,j'}) = \abs{j-j'}$, by $\dist_T(v_{i,j},v_{i',j'}) = j+j'$ for $i\neq i'$, and by $\dist_T(c,v_{i,j}) = j$. Consequently, in $T^k$ the vertex $v_{i,j}$ is adjacent to $v_{i,j'}$ exactly when $0<\abs{j-j'}\leq k$, to $v_{i',j'}$ with $i'\neq i$ exactly when $j+j'\leq k$, and to $c$ exactly when $j\leq k$.

    Let
    $$
    	B = \{v_{i,j}\ :\ 1\leq i\leq m-1,\ \ell-k+1\leq j\leq \ell\}
    $$
    consist of the $k$ outermost vertices of each of the first $m-1$ legs, so that $\abs{B} = (m-1)k$. We exhibit a chronological list of forces for $B$ in $T^k$.

    Process the legs $i=1,2,\dots,m-1$ in turn and, within leg $i$, process $t = \ell-k,\ell-k-1,\dots,1$ in this order, letting $v_{i,t+k}$ force $v_{i,t}$. When leg $i$ is processed at step $t$, the black set contains $B$, all of the legs $1,\dots,i-1$, and $v_{i,t+1},\dots,v_{i,\ell}$. Since $t\geq 1$, we have $t+k>k$, so $v_{i,t+k}$ is adjacent neither to $c$ nor to any vertex of another leg. Its neighbors inside leg $i$ are the $v_{i,j}$ with $t\leq j\leq \min\{\ell,t+2k\}$ and $j\neq t+k$, and all of them except $v_{i,t}$ are already black. Hence the force is valid, and after Phase~1 the legs $1,\dots,m-1$ are entirely black.

   The neighbors of $v_{1,k}$ are $c$, the vertices $v_{1,j}$ with $1\leq j\leq 2k$ and $j\neq k$, and the vertices $v_{i',j'}$ with $i'\neq 1$ and $j'\leq 0$, of which there are none. All of them except $c$ are black, so $v_{1,k}\rightarrow c$.

    Assume $k\geq 2$ and let $j$ run through $1,2,\dots,k-1$ in this order. At step $j$, every vertex is black except $v_{m,j},v_{m,j+1},\dots,v_{m,\ell}$. The neighbors of $v_{1,k-j}$ are the center $c$, some vertices of leg $1$, and the vertices $v_{i',j'}$ with $i'\neq 1$ and $j'\leq k-(k-j) = j$. Among the latter, those lying on the legs $2,\dots,m-1$ are black after Phase~1, and those lying on leg $m$ are $v_{m,1},\dots,v_{m,j}$, of which only $v_{m,j}$ is white. Hence $v_{1,k-j}\rightarrow v_{m,j}$.

    At this point every vertex is black except $v_{m,k},v_{m,k+1},\dots,v_{m,\ell}$. The neighbors of $c$ in $T^k$ are exactly the vertices $v_{i,j}$ with $j\leq k$, and among these only $v_{m,k}$ is white; hence $c\rightarrow v_{m,k}$.

    Finally, for $t = k+1,k+2,\dots,\ell$, the vertex $v_{m,t-k}$ forces $v_{m,t}$: its neighbors are the $v_{m,j}$ with $\abs{j-(t-k)}\leq k$, together with $c$ and, if $t-k\leq k$, some vertices of the other legs, and all of these except $v_{m,t}$ are black.

    Therefore $B$ is a zero forcing set of $T^k$ of size $(m-1)k$.
\end{proof}

For $k=1$ the bound of Proposition~\ref{prop:spider} is attained, since $Z(T) = P(T) = m-1$ for every spider $T = S_m(\ell)$ with $m\geq 2$, where $P$ denotes the path cover number \cite{AIM}. An exhaustive computation for $2\leq m\leq 5$, $\ell\in\{4,5\}$ and $1\leq k\leq 3$ gives equality in every case considered, which suggests the following.

\begin{conjecture}\label{conj:spider}
    Let $m \geq 2$, $k \geq 1$ and $\ell \geq k+1$. Then $Z\bigl(S_m(\ell)^k\bigr) = (m-1)k$.
\end{conjecture}

A similar construction works for grids.

\begin{proposition}\label{prop:grid}
    Let $b \geq 1$, $k \geq 1$ and $\ell \geq k+1$. Then $Z\bigl((P_b\mathbin{\square} P_\ell)^k\bigr) \leq bk$.
\end{proposition}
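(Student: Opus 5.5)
The plan is to mimic the proof of Proposition~\ref{prop:paths}, applying the path argument row by row. Write the vertices of $G = P_b\mathbin{\square} P_\ell$ as $(r,j)$ with $1\leq r\leq b$ (row) and $1\leq j\leq \ell$ (column). The first step is to record the distance formula $\dist_G\bigl((r,j),(r',j')\bigr) = \abs{r-r'}+\abs{j-j'}$. From it, $(r,j)\sim(r',j')$ in $G^k$ exactly when $0<\abs{r-r'}+\abs{j-j'}\leq k$.

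The initial black set will be the first $k$ columns,
$$
B=\{(r,j)\ :\ 1\leq r\leq b,\ 1\leq j\leq k\},
$$
so that $\abs{B}=bk$. The forces are performed column by column. For $t=1,2,\dots,\ell-k$ in this order, and for each row $r=1,\dots,b$ in any order, we let $(r,t)$ force $(r,t+k)$.

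The key step is verifying that each force is valid. Consider the moment we process $(r,t)$. At that point all columns $1,\dots,t+k-1$ are black, together with possibly some vertices of column $t+k$. The neighbors of $(r,t)$ in $G^k$ satisfy $\abs{j-t}\leq k-\abs{r'-r}$, so they all lie in columns at most $t+k$. Moreover, a neighbor in column exactly $t+k$ must have $\abs{r'-r}=0$, so it is $(r,t+k)$ itself. Hence $(r,t+k)$ is the unique white neighbor of $(r,t)$, if it is white at all, and the force is valid. In particular the $b$ forces within one column do not interfere with one another, so their order is irrelevant. The hypothesis $\ell\geq k+1$ only ensures that there is at least one column to force (and that $B$ is a proper subset); no further case analysis appears necessary.

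After step $t=\ell-k$ every column is black, so $B$ is a zero forcing set of $(P_b\mathbin{\square} P_\ell)^k$ and $Z\bigl((P_b\mathbin{\square} P_\ell)^k\bigr)\leq bk$. I do not expect a real obstacle here. The only point needing care is the observation that the $\ell^1$-ball of radius $k$ around $(r,t)$ meets column $t+k$ in a single vertex and meets no column beyond it, and this follows immediately from the distance formula.
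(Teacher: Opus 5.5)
Your proof is correct and is essentially the paper's argument. You use the same initial set (the first $k$ columns) and the same forces $(r,t)\rightarrow(r,t+k)$, which the paper writes as $(r,t-k+1)\rightarrow(r,t+1)$ once columns $1,\dots,t$ are black, justified by the same observation that the radius-$k$ $\ell^1$-ball around $(r,t)$ reaches column $t+k$ only at $(r,t+k)$ and no column beyond it.
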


\begin{proof}
    Write the vertices of $P_b\mathbin{\square} P_\ell$ as pairs $(r,t)$ with $1\leq r\leq b$ and $1\leq t\leq \ell$, so that the distance between $(r,t)$ and $(r',t')$ equals $\abs{r-r'}+\abs{t-t'}$. Color black the first $k$ columns, that is, let $B = \{(r,t)\ :\ 1\leq r\leq b,\ 1\leq t\leq k\}$, so that $\abs{B} = bk$. Suppose that all of the columns $1,\dots,t$ are black for some $t\geq k$ and let $1\leq r\leq b$. The neighbors of $(r,t-k+1)$ in the $k$-th power are the vertices $(r',t')$ with $\abs{r-r'}+\abs{t-k+1-t'}\leq k$; such a vertex lies outside the first $t$ columns only if $t'\geq t+1$, which forces $\abs{t-k+1-t'}\geq k$ and therefore $r'=r$ and $t'=t+1$. Hence $(r,t+1)$ is the unique white neighbor of $(r,t-k+1)$ and can be forced. Applying this for $r=1,\dots,b$ turns column $t+1$ black, and the claim follows by induction on $t$.
\end{proof}

An exhaustive computation for $1\leq b\leq 3$, $1\leq k\leq 3$ and $k+1\leq \ell\leq 10$ shows that the bound of Proposition~\ref{prop:grid} is attained in every case considered; for $k=1$ this is the classical equality $Z(P_b\mathbin{\square} P_\ell) = \min\{b,\ell\}$ \cite{AIM}.

\begin{conjecture}\label{conj:grid}
    Let $b \geq 1$, $k \geq 1$ and $\ell \geq k+1$. Then $Z\bigl((P_b\mathbin{\square} P_\ell)^k\bigr) = bk$.
\end{conjecture}

\section{Spectral Bounds}\label{sec:spectral-bounds}

In this section we prove several spectral bounds for the zero forcing (and positive semidefinite zero forcing) number of a power graph $G^k$ in terms of the eigenvalues and multiplicities of the original graph $G$. Not many spectral bounds for $Z(G)$ are known in the literature \cite{kalinowski2019zero,ZWWJ2022,AZ2024}, thus our work in this section contributes to extend this line of research. The technical device used throughout is to apply a polynomial $p$ of degree at most $k$ to the adjacency matrix of $G$, so that the support of $p(A)$ is contained in that of $G^k$; this idea goes back to the eigenvalue bounds for the $k$-independence number obtained in \cite{acf19,acfnz21}.

\subsection{Expander Mixing Lemma}

The following result is one of the few spectral lower bounds known on the zero forcing number. In this section we show that this result holds in the more general case of graph powers. To do so, we will use some preliminary spectral results and linear optimization techniques.

\begin{theorem}{\cite[Theorem 1.4(i)]{kalinowski2019zero}}\label{thm:Suakovetal-Zevbound}
    Let $G$ be a $d$-regular graph on $n$ vertices with adjacency eigenvalues $\lambda_1\geq \dots \geq \lambda_n$ and let $\lambda = \max\{\abs{\lambda_2},\abs{\lambda_n}\}$. Then
    $$
        Z(G) \geq n\left(1-\frac{2\lambda}{d+\lambda}\right).
    $$
\end{theorem}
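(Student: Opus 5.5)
The plan is to derive the bound from the expander mixing lemma together with a structural fact about zero forcing sets. If $B$ is a zero forcing set with $|B| = Z(G)$, then the forcing process produces $n - Z(G)$ forcing chains. From these chains we want two disjoint vertex sets that have no edges between them and that are both large. The standard choice is to stop the chronological list of forces after $s$ forces, where $s$ is chosen in a balanced way. Let $X$ be the set of vertices that are already black and have already performed their force. Let $Y$ be the set of white vertices at that moment. A vertex in $X$ has no white neighbours after it forces, and by Observation~\ref{obs:atmostonce} it never forces again. Hence $X$ and $Y$ are disjoint, there are no edges between them, and $e(X,Y)=0$.

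Next we count. After $s$ forces we have $|Y| = n - |B| - s$ and $|X| = s$, since every performed force contributes a distinct forcing vertex that is now inactive. Choosing $s$ roughly equal to $(n-Z(G))/2$ makes both sets of size about $(n - Z(G))/2$. We can also allow $|X|=s$ and $|Y|=n-Z-s$ in general and optimize the product $|X||Y|$ afterwards.

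We then apply the expander mixing lemma for a $d$-regular graph:
$$\Bigl| e(X,Y) - \frac{d|X||Y|}{n}\Bigr| \le \lambda \sqrt{|X||Y|\Bigl(1-\frac{|X|}{n}\Bigr)\Bigl(1-\frac{|Y|}{n}\Bigr)}.$$
Since $e(X,Y)=0$, this gives $d\sqrt{|X||Y|} \le \lambda \sqrt{(n-|X|)(n-|Y|)}$. With $|X| = |Y| = t = (n-Z)/2$, it becomes $dt \le \lambda (n - t)$, that is, $t \le \lambda n/(d+\lambda)$. Substituting $t$ gives $n - Z \le 2\lambda n/(d+\lambda)$, which is exactly the claimed bound.

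The main obstacle is parity. When $n - Z$ is odd we cannot split evenly, so we take $|X| = \lfloor (n-Z)/2\rfloor$ and $|Y| = \lceil (n-Z)/2\rceil$. We must then check that the inequality $d^2 xy \le \lambda^2 (n-x)(n-y)$ with $x+y = n-Z$ still yields the bound. Rearranged, the condition is $xy(d^2-\lambda^2) + \lambda^2 n(x+y) \le \lambda^2 n^2$. The left-hand side is increasing in $xy$, and $xy \ge ((x+y)^2-1)/4$. This route gives the bound up to rounding, so an integrality argument is needed. The cleaner alternative is to use the sharper form of the mixing lemma, $e(X,Y)=0 \Rightarrow \sqrt{xy} \le \frac{\lambda}{d}\sqrt{(n-x)(n-y)}$. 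Combined with the AM–GM inequality in the form $\sqrt{(n-x)(n-y)} \le n - (x+y)/2$ and $\sqrt{xy}$ compared to $(x+y)/2$, this needs some care. I would first try the mixing-lemma inequality in the form $\frac{xy}{(n-x)(n-y)} \le \lambda^2/d^2$ and verify that the unbalanced split still works. If it does not, I would fall back on exploiting the freedom of choosing which chain ends to count.
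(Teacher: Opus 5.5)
\paragraph{The gap: the odd case.} Your setup and the even case are correct. They follow the route the paper uses for its generalization, Theorem~\ref{thm:zero-forcing-power} with $k=1$ and $p(x)=x$: split a chronological list of forces (equivalently, a witness) into the first $t$ forcing vertices and the vertices still white afterwards, note there are no edges between them, and apply the expander mixing lemma. The gap is the case $m=n-Z(G)=2t+1$ odd, which you leave open, and none of your suggested remedies closes it.

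For a split with $\abs{X}+\abs{Y}=m$ and $e(X,Y)=0$, the mixing lemma reduces to $\abs{X}\abs{Y}(d^2-\lambda^2)\le\lambda^2 n(n-m)$. This is strongest at the balanced split $\abs{X}\abs{Y}=(m^2-1)/4$, so unbalanced splits give nothing new. The ``sharper form'' is the inequality you were already using. AM--GM bounds $\sqrt{xy}$ from above, which is the wrong direction for you. The balanced split only yields
\[
(d+\lambda)m-2\lambda n\le\frac{d^2-\lambda^2}{dm+\lambda(2n-m)}.
\]
This leaves a positive real slack, so integrality of $m$ only gives $m\le\ceil{2\lambda n/(d+\lambda)}$, that is, $Z(G)\ge\floor{n(1-2\lambda/(d+\lambda))}$. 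The paper stops at the same point: it cites the unrounded statement from \cite{kalinowski2019zero}, and its own argument recovers it only up to the integer part (Remark~\ref{rem:floor}).

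\paragraph{The missing idea: use both balanced splits at once.} Let $u_{t+1}\rightarrow w_{t+1}$ be the $(t+1)$-st force. Let $X_1,Y_1$ be your sets after $t$ forces, and let $X_2=X_1\cup\{u_{t+1}\}$, $Y_2=Y_1\setminus\{w_{t+1}\}$ be the sets after $t+1$ forces. Put $x=\vecchi_{X_1}+\vecchi_{X_2}$ and $y=\vecchi_{Y_1}+\vecchi_{Y_2}$. Then
\[
x^{\top}Ay=e(X_1,Y_1)+e(X_1,Y_2)+e(X_2,Y_1)+e(X_2,Y_2)=0+0+1+0=1,
\]
since $w_{t+1}$ is the only neighbor of $u_{t+1}$ in $Y_1$. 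Moreover $\inn{x,\j}=\inn{y,\j}=m$ and $\norm{x}{2}^2=\norm{y}{2}^2=2m-1$. Splitting off the $\j$-components gives $1\ge dm^2/n-\lambda(2m-1-m^2/n)$, that is, $m\bigl((d+\lambda)m-2\lambda n\bigr)\le n(1-\lambda)$.

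Finally, $\lambda\ge 1$ for every $d$-regular graph with $1\le d\le n-1$. Indeed $\operatorname{tr}A^2=nd$ gives $(n-1)\lambda^2\ge d(n-d)$, and $d(n-d)-(n-1)=(d-1)(n-d-1)\ge 0$. So the right-hand side above is nonpositive, $m\le 2\lambda n/(d+\lambda)$, and the unrounded bound $Z(G)\ge n(1-2\lambda/(d+\lambda))$ follows.
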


Theorem~\ref{thm:Suakovetal-Zevbound} only holds for regular graphs; it was extended to general graphs in \cite{AZ2024}. Although the spectra of $G$ and $G^k$ are in general not related, we can nevertheless bound $Z(G^k)$ in terms of the spectrum of $G$. The key tool is the following polynomial version of the expander mixing lemma, which generalizes~\cite[Theorem 18(ii)]{AZ2024}.

\begin{theorem}\label{thm:general-poly-EML}
    Let $G=(V,E)$ be a connected graph on $n$ vertices with eigenvalues $\lambda_1 \geq \dots \geq \lambda_n$, adjacency matrix $A$ and normalized Perron eigenvector $\vecnu$. Let $p\in\mathbb{R}_k[x]$ and let $\hat{\lambda}\in \{\lambda_2,\dots,\lambda_n\}$ such that $\abs{p(\hat{\lambda})}\geq \abs{p(\lambda_i)}$ for $i\geq 2$. For $S, T\subseteq V$, define $P(S,T)\coloneqq\sum_{i\in S,j\in T}p(A)_{ij}$. If $p(A)$ has nonnegative entries and $p(\lambda_1) > |p(\hat{\lambda})|$, then for any nonempty $S,T\subseteq V(G)$, we have
    \begin{align*}
        \left|P(S,T) - p(\lambda_1)\langle\vecchi_S,\vecnu\rangle \langle\vecchi_T,\vecnu\rangle\right| &\leq |p(\hat{\lambda})|\sqrt{(\abs{S}-\langle\vecchi_S,\vecnu\rangle^2)(|T|-\langle\vecchi_T,\vecnu\rangle^2)}\\
        &< |p(\hat{\lambda})|\sqrt{|S||T|}.
    \end{align*}
\end{theorem}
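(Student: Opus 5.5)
The plan is to run the standard proof of the expander mixing lemma with $A$ replaced by $p(A)$, using the spectral decomposition of $A$. Since $A$ is real symmetric, fix an orthonormal eigenbasis $\vecnu=\mathbf{u}_1,\mathbf{u}_2,\dots,\mathbf{u}_n$ with $A\mathbf{u}_i=\lambda_i\mathbf{u}_i$. Because $G$ is connected, $\lambda_1$ is simple by Perron--Frobenius, so $\mathbf{u}_1$ is forced to be the Perron vector. Then
\[
p(A)=\sum_{i=1}^n p(\lambda_i)\mathbf{u}_i\mathbf{u}_i^{\top},
\]
and we write $P(S,T)=\vecchi_S^{\top}p(A)\vecchi_T$. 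Subtracting the $i=1$ term gives
\[
P(S,T)-p(\lambda_1)\langle\vecchi_S,\vecnu\rangle\langle\vecchi_T,\vecnu\rangle=\sum_{i\geq 2}p(\lambda_i)\langle\vecchi_S,\mathbf{u}_i\rangle\langle\vecchi_T,\mathbf{u}_i\rangle .
\]

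Next, bound the right-hand side in absolute value. By the hypothesis $|p(\lambda_i)|\le |p(\hat\lambda)|$ for $i\ge 2$, it is at most $|p(\hat\lambda)|\sum_{i\ge2}|\langle\vecchi_S,\mathbf{u}_i\rangle||\langle\vecchi_T,\mathbf{u}_i\rangle|$. Cauchy--Schwarz then bounds this by
\[
|p(\hat\lambda)|\Bigl(\sum_{i\ge2}\langle\vecchi_S,\mathbf{u}_i\rangle^2\Bigr)^{1/2}\Bigl(\sum_{i\ge2}\langle\vecchi_T,\mathbf{u}_i\rangle^2\Bigr)^{1/2}.
\]
By Parseval, $\sum_{i\ge 1}\langle\vecchi_S,\mathbf{u}_i\rangle^2=\|\vecchi_S\|_2^2=|S|$, so the first sum equals $|S|-\langle\vecchi_S,\vecnu\rangle^2$, and likewise for $T$. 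This gives the first inequality.

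For the strict inequality, observe that $\vecnu$ is entrywise positive and $S\neq\emptyset$, so $\langle\vecchi_S,\vecnu\rangle>0$. Hence $|S|-\langle\vecchi_S,\vecnu\rangle^2<|S|$, and similarly for $T$. Both factors are nonnegative, so the product under the square root is strictly less than $|S||T|$. If $p(\hat\lambda)\neq 0$, multiplying by $|p(\hat\lambda)|$ gives the strict bound. The degenerate case $p(\hat\lambda)=0$ has to be flagged: both sides of the second inequality are then $0$, so strictness fails. I would either note that the strict inequality is meant for $p(\hat\lambda)\ne0$, or record only the non-strict version in that case.

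The nonnegativity of $p(A)$ and the condition $p(\lambda_1)>|p(\hat\lambda)|$ are not needed for the inequality itself. They only guarantee that $P(S,T)\ge 0$ and that the main term dominates, which matters in later applications. The only delicate points are the simplicity of $\lambda_1$, which makes $\vecnu$ well defined as $\mathbf{u}_1$, and the strictness argument just described. I expect no substantive obstacle beyond these.
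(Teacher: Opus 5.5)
Your proof is correct, but it takes a different route from the paper. The paper does no computation: it obtains the statement as a one-line specialization of the general irregular expander mixing lemma \cite[Theorem~9]{AZ2024}, with $B=p(A)$, $R=C=p(\lambda_1)I$ and $\vecx=\vecy=\vecnu$. You instead prove it from scratch. You expand $p(A)=\sum_i p(\lambda_i)\mathbf{u}_i\mathbf{u}_i^{\top}$ in an orthonormal eigenbasis with $\mathbf{u}_1=\vecnu$, split off the $i=1$ term, and finish with Cauchy--Schwarz and Parseval.

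The citation buys brevity, and it places the result in a framework where other choices of $\vecx,\vecy,R,C$ give other variants. Your argument buys self-containment and makes clear which hypotheses matter. It shows the inequality holds for every real polynomial $p$; nonnegativity of $p(A)$, the condition $p(\lambda_1)>\abs{p(\hat\lambda)}$ and the degree bound play no role in it. The degree bound, for instance, only matters later, for the support of $p(A)$ in Theorem~\ref{thm:zero-forcing-power}.

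Your caveat about strictness is right, and it points to a genuine defect in the statement as written. The hypotheses permit $p(\hat\lambda)=0$, for example $G=K_n$ and $p(x)=x+1$, where $p(A)=J$ and the final inequality reads $0<0$. This is harmless for the paper: Theorems~\ref{thm:zero-forcing-power} and~\ref{thm:zero-forcing-power-irregular} use only the first, non-strict inequality, and they assume $\abs{p(\hat\lambda)}>0$ anyway.
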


\begin{proof}
    This follows directly from~\cite[Theorem 9]{AZ2024} taking $\vecx=\vecy=\vecnu$, $B=p(A)$ and $R=C=p(\lambda_1)I$.
\end{proof}

Let $\vecs$ and $\vect$ be $m$-tuples of vertices such that $s_i\neq s_j$, and $t_i\neq t_j$ for all $i\neq j$. We call $(\vecs,\vect)$ an \emph{$m$-witness} if $s_i\sim t_i$ for all $i\in [m]$ and, for all $i<j$, both $s_i\not\sim t_j$ and $s_i\neq t_j$. Note that no condition is imposed on the pairs $(s_i,t_j)$ with $i>j$; in particular, the tuples $\vecs$ and $\vect$ may have entries in common.

\begin{lemma}\label{lem:grundy-LB}
    Let $G$ be a graph on $n$ vertices. Then $Z(G) \geq n - m$, where $m$ is the size of a largest witness in $G$.
\end{lemma}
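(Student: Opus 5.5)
The plan is to turn an optimal zero forcing set into a witness whose size is exactly the number of forces. Let $B$ be a zero forcing set of $G$ with $\abs{B}=Z(G)$, and fix a chronological list of forces $s_1\rightarrow t_1,\ s_2\rightarrow t_2,\ \dots,\ s_r\rightarrow t_r$ that colors all of $V(G)$ black. Each force colors exactly one new vertex, so the $t_i$ are distinct and $r=n-\abs{B}$. By Observation~\ref{obs:atmostonce}, every vertex performs at most one force, so the $s_i$ are distinct as well. Set $\vecs=(s_1,\dots,s_r)$ and $\vect=(t_1,\dots,t_r)$.

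Next I check that $(\vecs,\vect)$ is an $r$-witness. The condition $s_i\sim t_i$ holds because a vertex only forces one of its neighbors. For the conditions on pairs $i<j$, look at the moment the $i$-th force is performed. At that time $t_i$ is the unique white neighbor of $s_i$. The vertex $t_j$ with $j>i$ is still white then, since it is forced only later and vertices never revert. Moreover $t_j\neq t_i$. Hence $t_j$ cannot be a neighbor of $s_i$, so $s_i\not\sim t_j$.

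It also holds that $s_i\neq t_j$. The vertex $s_i$ is black when it performs the $i$-th force, while $t_j$ is white at that time. So they are different vertices.

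This gives a witness of size $r=n-Z(G)$, so the largest witness size satisfies $m\geq n-Z(G)$, that is, $Z(G)\geq n-m$. The pairs with $i>j$ are unconstrained, which matches the definition, since later forcing vertices may have been forced earlier. The only point needing care is the timing argument that $t_j$ is white at step $i$. There is no real obstacle beyond stating the chronology precisely.
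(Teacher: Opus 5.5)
Your proposal is correct and follows the paper's proof essentially step for step. You take a minimum zero forcing set and its chronological list of forces, and use Observation~\ref{obs:atmostonce} for distinctness of the forcing vertices. You then use the timing argument that $t_j$ is still white when $s_i$ forces, which gives both $s_i\neq t_j$ and $s_i\not\sim t_j$ for $i<j$.
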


\begin{proof}
    This is essentially \cite[Corollary 2.3]{BRESAR2017}, see also \cite[Lemma 3.1]{kalinowski2019zero}; we include a short argument for completeness. Let $B$ be a zero forcing set of $G$ with $\abs{B} = Z(G)$ and let $u_1\rightarrow w_1,\ u_2\rightarrow w_2,\ \dots,\ u_m\rightarrow w_m$ be a chronological list of forces for $B$. Every vertex outside $B$ is forced exactly once, so $m = n-Z(G)$ and the vertices $w_1,\dots,w_m$ are pairwise distinct; by Observation~\ref{obs:atmostonce}, the vertices $u_1,\dots,u_m$ are pairwise distinct as well. Put $\vecs = (u_1,\dots,u_m)$ and $\vect = (w_1,\dots,w_m)$. By construction $s_i\sim t_i$ for every $i$. Let $i<j$. At the moment when $u_i$ performs its force, the vertex $w_j$ is still white and $u_i$ is black, so $w_j\neq u_i$; moreover $w_j$ is not a neighbor of $u_i$, since otherwise $u_i$ would have had the two distinct white neighbors $w_i$ and $w_j$. Hence $(\vecs,\vect)$ is an $m$-witness of size $n-Z(G)$, which is therefore at most the size of a largest witness.
\end{proof}

Now we are ready to state the main result and spectral bounds of this paper.

\begin{theorem}\label{thm:zero-forcing-power}
    Let $G$ be a $d$-regular graph on $n$ vertices with $\diam(G) > k$, adjacency matrix $A$ and eigenvalues $\lambda_1 \geq \dots \geq \lambda_n$. Then for any polynomial $p\in \mathbb{R}_k[x]$ such that $p(A)$ has nonnegative entries and $p(d) > \abs{p(\hat{\lambda})} > 0$,
    $$
        Z(G^k) \geq \floor*{n\left(1-\frac{2\abs{p(\hat{\lambda})}}{p(d)+\abs{p(\hat{\lambda})}} \right)}.
    $$
\end{theorem}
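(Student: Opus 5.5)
We combine Lemma~\ref{lem:grundy-LB}, applied to $G^k$, with the polynomial expander mixing lemma, Theorem~\ref{thm:general-poly-EML}, applied to $G$. This follows the proof of Theorem~\ref{thm:Suakovetal-Zevbound}.

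Let $(\vecs,\vect)$ be a largest witness in $G^k$, of size $m$. By Lemma~\ref{lem:grundy-LB}, $Z(G^k)\ge n-m$, so it suffices to show
$$
m \le \frac{2\abs{p(\hat\lambda)}}{p(d)+\abs{p(\hat\lambda)}}\,n .
$$
Since $Z(G^k)$ is an integer, this already gives $Z(G^k)\ge\lceil n-\text{bound}\rceil$, which is at least the stated floor.

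Split the witness into halves. Put $h=\lceil m/2\rceil$, $S=\{s_1,\dots,s_h\}$ and $T=\{t_{h+1},\dots,t_m\}$, so that $\abs{S}=\lceil m/2\rceil$ and $\abs{T}=\lfloor m/2\rfloor$. For $i<j$ the witness conditions give $s_i\neq t_j$ and $s_i\not\sim t_j$ in $G^k$, that is, $\dist_G(s_i,t_j)>k$. Hence $S\cap T=\emptyset$ and no pair in $S\times T$ lies within distance $k$ in $G$.

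The key point is that $p$ has degree at most $k$. Every monomial $A^r$ with $r\le k$ has $(u,v)$-entry zero whenever $\dist_G(u,v)>k$, so $p(A)_{uv}=0$ for all $u\in S$, $v\in T$, and therefore $P(S,T)=0$. The hypothesis $\diam(G)>k$ ensures that such configurations are not excluded trivially, i.e.\ $G^k$ is not complete; if $m\le1$ the bound is immediate anyway.

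Now apply Theorem~\ref{thm:general-poly-EML}. Since $G$ is $d$-regular, $\vecnu=\j/\sqrt n$, so $\langle\vecchi_S,\vecnu\rangle^2=\abs{S}^2/n$, and similarly for $T$. Write $s=\abs{S}$, $t=\abs{T}$, $\mu=\abs{p(\hat\lambda)}$ and $\theta=p(d)$. With $P(S,T)=0$ the theorem gives
$$
\theta\,\frac{st}{n}\le \mu\sqrt{s\Bigl(1-\frac sn\Bigr)\,t\Bigl(1-\frac tn\Bigr)} .
$$

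If $m$ is even, then $s=t=m/2$ and the inequality becomes $\theta\,\frac{m}{2n}\le \mu\bigl(1-\frac{m}{2n}\bigr)$. Rearranging yields $m\le 2\mu n/(\theta+\mu)$, as required.

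The main obstacle is odd $m$, where $s=t+1$. Here I would use the AM--GM inequality
$$
\sqrt{s(1-s/n)\,t(1-t/n)}\le \tfrac12\bigl[s(1-s/n)+t(1-t/n)\bigr]
$$
and compare the result with $st$. This should still give $m\le 2\mu n/(\theta+\mu)$, up to a small error, and the floor in the statement is presumably there to absorb exactly this rounding loss. If the direct comparison fails, the fallback is to apply the even-case argument to the $(m-1)$-witness obtained by dropping the last pair. That gives $m-1\le 2\mu n/(\theta+\mu)$, which only yields $Z(G^k)\ge \text{bound}-1$ — one short of the stated bound — so this route must instead be combined with a refinement of the odd-case estimate. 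Throughout, the positivity hypotheses $\theta>\mu>0$ ensure that the divisions are valid and that Theorem~\ref{thm:general-poly-EML} applies.
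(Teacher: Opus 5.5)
Your even case is correct and is exactly the paper's argument. The gap is the odd case, which you rightly call the main obstacle: it is not proved, and the target you set for it cannot be reached.

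Write $m=2h+1$ with $h\ge 1$, and set $q=\mu/(\theta+\mu)$, $r=\mu/\theta$, $a=h/n$, $b=(h+1)/n$ and $f(x)=x/(1-x)$. For your split, the inequality from Theorem~\ref{thm:general-poly-EML} is exactly $f(a)f(b)\le r^2$. On the other hand, $m\le 2qn$ is equivalent to $f\bigl(\tfrac{a+b}{2}\bigr)\le r$. Since $\log f$ is strictly concave on $(0,\tfrac12)$, we have $f\bigl(\tfrac{a+b}{2}\bigr)>\sqrt{f(a)f(b)}$. So for any $r$ between these two values, the mixing-lemma inequality holds and yet $m>2qn$. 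Hence your reduction ``it suffices to show $m\le 2qn$'' cannot be carried out from this inequality when $m$ is odd. The floor in the statement is not an allowance for ``a small error''; it has to be earned by an integrality argument.

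The missing idea is to prove the \emph{strict} inequality $h<qn$, i.e.\ $m<2qn+1$. Since $m$ is an integer, this gives $m\le\lceil 2qn\rceil$, and hence $Z(G^k)\ge n-\lceil 2qn\rceil=\lfloor n(1-2q)\rfloor$. The paper obtains $h<qn$ from $\sqrt{(1-a)(1-b)}\le 1-a$ together with $\sqrt{h(h+1)}>h$.

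Your AM--GM estimate also works if you push it through to this target. It gives $\theta\,h(h+1)/n\le\mu\bigl(h+\tfrac12-(h^2+h+\tfrac12)/n\bigr)<\mu(h+1)(1-h/n)$. Dividing by $h+1$ yields $h<qn$.

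Strictness is precisely what your fallback lacks. Dropping the last pair gives only $h\le qn$. Combined with integrality, this still yields the stated bound unless $qn$ is an integer equal to $h$; in that case it is one short.
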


\begin{proof}
    Let $(\vecs,\vect)$ be a largest $m$-witness in $G^k$ and set $q \coloneqq \abs{p(\hat{\lambda})} / (p(d) + \abs{p(\hat{\lambda})})\in (0,1)$. We will prove the case of even and odd $m$ separately.
    
    \begin{description}
        \item[Case $m$ even.]
            Let $m = 2\mu n$ and put $S=\{s_1,s_2,\dots,s_{\mu n}\}$ and $T=\{t_{\mu n+1},t_{\mu n+2},\dots,t_{m}\}$. Every index occurring in $S$ is smaller than every index occurring in $T$, so by the definition of a witness there are no edges of $G^k$ between $S$ and $T$ and moreover $S\cap T = \emptyset$. Consequently $P(S,T) = 0$: indeed, $\deg p\leq k$ confines the support of $p(A)$ to the pairs of vertices at distance at most $k$ in $G$, that is, to the pairs that are equal or adjacent in $G^k$, and neither occurs for $u\in S$ and $v\in T$. Since $G$ is $d$-regular, we also know that $\vecnu = \j / \sqrt{n}$, where $\j$ denotes the all-one vector. It follows from Theorem~\ref{thm:general-poly-EML} that 
            \begin{align}
                0 &= \sum_{u\in S,v\in T}p(A)_{uv}\nonumber \\
                &\geq p(d)\frac{(\mu n)^2}{n} - \abs{p(\hat{\lambda})}\left(\mu n - \frac{(\mu n)^2}{n}\right)\label{eq:zero-condition}\\
                &= \mu n(p(d)\mu - \abs{p(\hat{\lambda})}(1-\mu)).\nonumber 
            \end{align}
            and hence~$\mu \leq q$. Note that if $\abs{p(\hat{\lambda})} = 0$, then also $p(d)=0$ by Equation~\eqref{eq:zero-condition}. To prevent division by zero, we therefore require $\abs{p(\hat{\lambda})} > 0$. Using Lemma~\ref{lem:grundy-LB}, we obtain
            $$
                Z(G^k) \geq n - m = n(1-2\mu) \geq n(1-2q) \geq \floor*{n(1-2q)}.
            $$
        \item[Case $m$ odd.]
            Let $m = 2\mu n + 1$ and put $S=\{s_1,s_2,\dots,s_{\mu n}\}$ and $T=\{t_{\mu n+1},t_{\mu n+2},\dots,t_{m}\}$. Note that if $G^k$ is complete, a largest witness has size one and $S$ is the empty set. We therefore require $\diam(G) > k$. As in the previous case, $S\cap T = \emptyset$, $P(S,T)=0$ and $\vecnu = \j / \sqrt{n}$. If $\mu n = 0$, then $m = 1$ and $Z(G^k)\geq n-1 \geq \floor*{n(1-2q)}$, because $q>0$. Assume therefore that $\mu n \geq 1$. It follows from Theorem~\ref{thm:general-poly-EML} that 
            \begin{align*}
                0 &= \sum_{u\in S,v\in T}p(A)_{uv}\\
                &\geq p(d)\frac{\mu n(\mu n+1)}{n} - \abs{p(\hat{\lambda})}\sqrt{\mu n(\mu n + 1)\left(1 - \frac{\mu n}{n}\right)\left(1 - \frac{\mu n+1}{n}\right)}\\
                &\geq p(d)\frac{\mu n(\mu n+1)}{n} - \abs{p(\hat{\lambda})}\left(1 - \frac{\mu n}{n}\right)\sqrt{\mu n(\mu n + 1)}\\
                &\geq \sqrt{\mu n(\mu n + 1)} \left( p(d)\frac{\sqrt{\mu n(\mu n + 1)}}{n} - \abs{p(\hat{\lambda})}\left(1 - \mu\right)\right), 
            \end{align*}
            from which we obtain
            $$
                0 \geq p(d)\frac{\sqrt{\mu n(\mu n + 1)}}{n} - \abs{p(\hat{\lambda})}\left(1 - \mu\right).
            $$
            Since $\mu n\geq 1$, we have $\sqrt{\mu n(\mu n+1)} > \mu n$, and therefore $p(d)\mu - \abs{p(\hat{\lambda})}(1-\mu) < 0$, that is, $\mu < q$ strictly. Hence $m = 2\mu n+1 < 2qn+1$, and since $m$ is an integer, $m\leq \ceil*{2qn}$. Using Lemma~\ref{lem:grundy-LB} once more,
            \[
                Z(G^k) \geq n - m \geq n - \ceil*{2qn} = \floor*{n-2qn} = \floor*{n(1-2q)}.\qedhere
            \]
    \end{description}
\end{proof}

\begin{remark}\label{rem:floor}
    The integer part in Theorem~\ref{thm:zero-forcing-power} originates in the case of a witness of odd size. If a largest witness of $G^k$ has even size, then the proof gives the unrounded bound $Z(G^k)\geq n\bigl(1-2\abs{p(\hat{\lambda})}/(p(d)+\abs{p(\hat{\lambda})})\bigr)$. If it has odd size, then the sets $S$ and $T$ have the different cardinalities $\mu n$ and $\mu n+1$, and the strict inequality $\sqrt{\mu n(\mu n+1)}>\mu n$ compensates for this asymmetry up to the integer part. The same rounding is implicit in Theorem~\ref{thm:Suakovetal-Zevbound}, whose proof only considers witnesses of even size. In any case the loss is smaller than one, and it vanishes whenever the right-hand side happens to be an integer.
\end{remark}

Note that if we choose $k=1$ and $p(x)=x$ in Theorem~\ref{thm:zero-forcing-power}, we recover Theorem~\ref{thm:Suakovetal-Zevbound}, up to the integer part, as a corollary.

Theorem~\ref{thm:zero-forcing-power} can be extended to non-regular graphs, at the cost of a factor depending on the Perron eigenvector.

\begin{theorem}\label{thm:zero-forcing-power-irregular}
    Let $G$ be a connected graph on $n$ vertices with $\diam(G) > k$, adjacency matrix $A$, eigenvalues $\lambda_1 \geq \dots \geq \lambda_n$ and normalized Perron eigenvector $\vecnu$, and let $\nu_{\min} = \min_i \nu_i > 0$. Then for any $p \in \mathbb{R}_k[x]$ such that $p(A)$ has nonnegative entries and $p(\lambda_1) > \abs{p(\hat\lambda)} > 0$,
	$$
	   Z(G^k) \geq \floor*{\,n-\frac{2\abs{p(\hat{\lambda})}}{\nu_{\min}^2\left(p(\lambda_1)+\abs{p(\hat{\lambda})}\right)}}.
	$$
\end{theorem}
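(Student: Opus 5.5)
We follow the proof of Theorem~\ref{thm:zero-forcing-power}. The only change is that the Perron vector is no longer constant, so we bound the inner products $\langle\vecchi_S,\vecnu\rangle$ from below using $\nu_{\min}$. Let $(\vecs,\vect)$ be a largest $m$-witness in $G^k$. Since $\diam(G)>k$, the graph $G^k$ is not complete. We split the witness into a prefix and a suffix as before: $S=\{s_1,\dots,s_a\}$ and $T=\{t_{a+1},\dots,t_m\}$, with $a=\floor{m/2}$, so $b:=\abs{T}=\ceil{m/2}$. By the witness conditions, $S\cap T=\emptyset$ and there are no $G^k$-edges between $S$ and $T$. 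Since $\deg p\le k$, the support of $p(A)$ lies on pairs at distance at most $k$ in $G$, so $P(S,T)=0$. If $a=0$ then $m=1$, and the bound is trivial because $Z(G^k)\ge n-1$ and the subtracted term is positive. So we may assume $a\ge 1$.

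Write $\sigma=\langle\vecchi_S,\vecnu\rangle$ and $\tau=\langle\vecchi_T,\vecnu\rangle$. Theorem~\ref{thm:general-poly-EML} gives
\[
p(\lambda_1)\sigma\tau \le \abs{p(\hat\lambda)}\sqrt{(a-\sigma^2)(b-\tau^2)}.
\]
Because the entries of $\vecnu$ are at least $\nu_{\min}$, we have $\sigma\ge a\nu_{\min}$ and $\tau\ge b\nu_{\min}$. The left-hand side increases in $\sigma$ and $\tau$, while the right-hand side decreases in them. Hence the inequality still holds after substituting these lower bounds:
\[
p(\lambda_1)\nu_{\min}^2 ab \le \abs{p(\hat\lambda)}\sqrt{(a-a^2\nu_{\min}^2)(b-b^2\nu_{\min}^2)}.
\]
Using $\sqrt{ab}\ge a$ and $1-b\nu_{\min}^2\le 1-a\nu_{\min}^2$, the right-hand side is at most $\abs{p(\hat\lambda)}\sqrt{ab}\,(1-a\nu_{\min}^2)$. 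Dividing by $\sqrt{ab}$ and then using $\sqrt{ab}\ge a$ on the left gives
\[
p(\lambda_1)\nu_{\min}^2 a \le \abs{p(\hat\lambda)}(1-a\nu_{\min}^2),
\]
so $a\le q/\nu_{\min}^2$ with $q=\abs{p(\hat\lambda)}/(p(\lambda_1)+\abs{p(\hat\lambda)})$. The one step to double-check is that the right-hand side really does decrease in $\sigma$ and $\tau$. This holds because $a-\sigma^2\ge 0$ and $b-\tau^2\ge0$ (by Cauchy--Schwarz, since $\norm{\vecnu}{2}=1$).

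For the parity bookkeeping, the even case is $m=2a$, and it gives $m\le 2q/\nu_{\min}^2$ directly. In the odd case $m=2a+1$ and $b=a+1$, so $\sqrt{ab}>a$ strictly and the inequality becomes strict: $a<q/\nu_{\min}^2$. Then $m<2q/\nu_{\min}^2+1$, and since $m$ is an integer, $m\le\ceil{2q/\nu_{\min}^2}$. In both cases Lemma~\ref{lem:grundy-LB} yields
\[
Z(G^k)\ge n-m\ge n-\ceil*{2q/\nu_{\min}^2}=\floor*{n-2q/\nu_{\min}^2},
\]
which is the claimed bound.
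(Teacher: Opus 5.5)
Your proposal is correct and follows the paper's proof essentially step by step. It uses the same prefix/suffix split of a largest witness, the vanishing of $P(S,T)$, the substitution $\langle\vecchi_S,\vecnu\rangle\geq \abs{S}\nu_{\min}$ into Theorem~\ref{thm:general-poly-EML}, and the same parity bookkeeping with the strict inequality $\sqrt{a(a+1)}>a$ in the odd case. Your differences are cosmetic: you handle both parities at once via $a=\lfloor m/2\rfloor$, $b=\lceil m/2\rceil$, and you explicitly justify, by Cauchy--Schwarz, that the right-hand side decreases in $\sigma,\tau$, which the paper leaves implicit.
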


\begin{proof}
    Let $(\vecs,\vect)$ be a largest $m$-witness in $G^k$ and write
    $$
        Q \coloneqq \frac{2\abs{p(\hat\lambda)}}{\nu_{\min}^2\bigl(p(\lambda_1)+\abs{p(\hat\lambda)}\bigr)} > 0,
    $$
    so that the claimed bound reads $Z(G^k)\geq \floor*{n-Q}$.

    Suppose first that $m = 2\mu$ is even and put $S = \{s_1,\dots,s_{\mu}\}$ and $T = \{t_{\mu+1},\dots,t_{m}\}$. As in the proof of Theorem~\ref{thm:zero-forcing-power}, no edge of $G^k$ joins $S$ and $T$ and $S\cap T = \emptyset$, so $P(S,T) = 0$, since $\deg p \leq k$ confines the support of $p(A)$ to pairs of vertices at distance at most $k$. By Theorem~\ref{thm:general-poly-EML},
    $$
        0 \geq p(\lambda_1)\inn{\vecchi_S,\vecnu}\inn{\vecchi_T,\vecnu}-\abs{p(\hat\lambda)}\sqrt{\bigl(\mu-\inn{\vecchi_S,\vecnu}^2\bigr)\bigl(\mu-\inn{\vecchi_T,\vecnu}^2\bigr)}.
    $$
    Since $\vecnu > 0$, we have $\inn{\vecchi_S,\vecnu} \geq \mu\nu_{\min}$ and $\mu-\inn{\vecchi_S,\vecnu}^2 \leq \mu(1-\mu\nu_{\min}^2)$, and likewise for $T$. Substituting,
    $$
        p(\lambda_1)\mu^2\nu_{\min}^2 \leq \abs{p(\hat\lambda)}\mu(1-\mu\nu_{\min}^2),
    $$
    whence $\mu\nu_{\min}^2\bigl(p(\lambda_1)+\abs{p(\hat\lambda)}\bigr) \leq \abs{p(\hat\lambda)}$, that is, $m = 2\mu\leq Q$. By Lemma~\ref{lem:grundy-LB} we conclude that $Z(G^k)\geq n-m\geq n-Q\geq \floor*{n-Q}$.

    Suppose now that $m = 2\mu+1$ is odd, and take $S = \{s_1,\dots,s_{\mu}\}$ and $T = \{t_{\mu+1},\dots,t_{m}\}$, so that $\abs{S} = \mu$ and $\abs{T} = \mu+1$. If $\mu = 0$, then $m=1$ and $Z(G^k)\geq n-1\geq \floor*{n-Q}$, because $Q>0$. Assume therefore that $\mu\geq 1$. Arguing as above, $\inn{\vecchi_S,\vecnu}\inn{\vecchi_T,\vecnu}\geq \mu(\mu+1)\nu_{\min}^2$ and
    \begin{align*}
        \sqrt{\bigl(\mu-\inn{\vecchi_S,\vecnu}^2\bigr)\bigl(\mu+1-\inn{\vecchi_T,\vecnu}^2\bigr)}
        &\leq \sqrt{\mu(\mu+1)}\sqrt{(1-\mu\nu_{\min}^2)\bigl(1-(\mu+1)\nu_{\min}^2\bigr)}\\
        &\leq \sqrt{\mu(\mu+1)}\,(1-\mu\nu_{\min}^2),
    \end{align*}
    so that $p(\lambda_1)\mu(\mu+1)\nu_{\min}^2\leq \abs{p(\hat\lambda)}\sqrt{\mu(\mu+1)}\,(1-\mu\nu_{\min}^2)$ and hence
    $$
        p(\lambda_1)\sqrt{\mu(\mu+1)}\,\nu_{\min}^2\leq \abs{p(\hat\lambda)}\bigl(1-\mu\nu_{\min}^2\bigr).
    $$
    Since $\mu\geq 1$ we have $\sqrt{\mu(\mu+1)}>\mu$, so that $p(\lambda_1)\mu\nu_{\min}^2 < \abs{p(\hat\lambda)}(1-\mu\nu_{\min}^2)$, that is, $\mu\nu_{\min}^2\bigl(p(\lambda_1)+\abs{p(\hat\lambda)}\bigr) < \abs{p(\hat\lambda)}$ and $m = 2\mu+1 < Q+1$. As $m$ is an integer, $m\leq \ceil*{Q}$, and Lemma~\ref{lem:grundy-LB} gives $Z(G^k)\geq n-m\geq n-\ceil*{Q} = \floor*{n-Q}$.
\end{proof}

If $G$ is $d$-regular, then $\vecnu = \j/\sqrt{n}$ and $\nu_{\min}^2 = 1/n$, so Theorem~\ref{thm:zero-forcing-power-irregular} specializes exactly to Theorem~\ref{thm:zero-forcing-power}. On the other hand, the bound degrades quickly when $\nu_{\min}$ is small, that is, for graphs that are far from regular. Replacing the crude estimate $\inn{\vecchi_S,\vecnu}\geq \abs{S}\nu_{\min}$ by one based on weight-equitable partitions, in the spirit of \cite{AZ2024}, is a natural direction for further work.

\subsubsection{Optimization of the Bound in Theorem~\ref{thm:zero-forcing-power}}

Note that the lower bound in Theorem~\ref{thm:zero-forcing-power} is valid for many different choices of a polynomial $p$. A natural question is whether we can determine the polynomial $p$ which gives the best possible bound. In this section, we formulate a mixed-integer linear program (MILP) which finds a polynomial that maximizes the lower bound in Theorem~\ref{thm:zero-forcing-power}. Note that while MILP is known to be NP-hard in general, in order to model the bound from Theorem \ref{thm:zero-forcing-power} we will propose an optimization program which is actually in P since it only has one (binary) integer variable. 

The proposed MILP model has variables $a_0,a_1\dots,a_k\in \mathbb{R}$ to represent the coefficients of $p$, i.e., $p(x) = a_0 + a_1x + \dots + a_k x^k$. Since it is not clear beforehand for which $i=2,\dots,n$ we have $\hat{\lambda} = \lambda_i$, we will run the MILP once for each distinct eigenvalue $\lambda_{\ell}$ with the assumption that $\hat{\lambda} = \lambda_{\ell}$. 

Note that the bound in Theorem~\ref{thm:zero-forcing-power} is invariant under multiplying $p$ by a positive constant. We will therefore assume without loss of generality that $p(d) + \abs{p(\hat{\lambda})} = 1$. In particular, this means that $\abs{p(\hat{\lambda})} < 1$. We will use this fact to derive a linear representation of the term $\abs{p(\hat{\lambda})}$. 

Let $y^+,y^-\in \mathbb{R}_{\geq 0}$ be nonnegative variables. We would like to have $p(\hat{\lambda}) = y^+$ and $y^- = 0$ if $p(\hat{\lambda}) \geq 0$ and $p(\hat{\lambda}) = -y^-$ and $y^+ = 0$ otherwise. We can ensure this by setting $p(\hat{\lambda}) = y^+-y^-$ and introducing a binary variable $b\in \{0,1\}$ with constraints $y^+ \leq b$, $y^- \leq 1-b$. Since $\abs{p(\hat{\lambda})} < 1$, these constraints ensure that $y^+ = p(\hat{\lambda})$ if $p(\hat{\lambda}) \geq 0$ and $-y^- = p(\hat{\lambda})$ otherwise. Using these additional variables, we now have $\abs{p(\hat{\lambda})} = y^+ + y^-$, which is a linear expression.

With the additional variables $y^+,y^-$ and the assumption $p(d) + \abs{p(\hat{\lambda})} = 1$, we can linearize the objective function as
$$
    n\left(1 - \frac{2\abs{p(\hat{\lambda})}}{p(d) + \abs{p(\hat{\lambda})}} \right) = n\left(1 - 2(y^+ + y^-) \right).
$$
MILP~\eqref{eq:MILP} shows how the remaining conditions of Theorem~\ref{thm:zero-forcing-power} can be converted to linear constraints. 

\begin{equation}
    \def\arraystretch{1.3}
    \boxed{
    \begin{tabular}{r l l l l}
        {\tt maximize} & $n\left( 1 - 2(y^++y^-) \right)$ & & \\
        {\tt s.t.} & $0 \leq y^+ \leq b$ & & \multirow{3}{*}[-0.5em]{$\begin{rcases*}\ \\ \ \\ \ \end{rcases*}$ } & \multirow{3}{*}[-0.5em]{linearizing $\abs{p(\hat{\lambda})}$} \\
        & $0 \leq y^- \leq 1-b$ &&&\\
        & $y^+ - y^- = \displaystyle \sum_{i=0}^k a_i\hat{\lambda}^i$ & & & \\
        & $\displaystyle \sum_{i=0}^k a_i d^i > y^+ + y^-$ & & $\begin{rcases*} \ \end{rcases*}$ & $p(d) > |p(\hat{\lambda})|$\\
        & $\displaystyle \sum_{i=0}^k a_i d^i + y^+ + y^- = 1$ & & $\begin{rcases*} \ \end{rcases*}$ &  $p(d) + \abs{p(\hat{\lambda})} = 1$\\
        & $y^+ + y^- \geq \displaystyle
              \sum_{i=0}^k a_i \lambda_j^i,$ & $j = 2,\dots, n$ &
        \multirow{2}{*}[-0.8em]{$\begin{rcases*} \ \\ \ \end{rcases*}$} & \multirow{2}{*}[-0.7em]{$\abs{p(\hat{\lambda})} \geq \abs{p(\lambda_j)}$} \\
        & $y^+ + y^- \geq \displaystyle -\sum_{i=0}^k a_i \lambda_j^i,$ & $j = 2,\dots, n$ & & \\
        & $\displaystyle \sum_{i=0}^k a_i (A^i)_{uv} \geq 0$ & $u,v\in V$ & $\begin{rcases*} \ \end{rcases*}$ & $p(A)$ nonnegative \\
        & $y^+ + y^- > 0$ & & $\begin{rcases*} \ \end{rcases*}$ & $\abs{p(\hat{\lambda})} > 0$ \\
        & $y^+, y^-, a_0,\dots, a_k\in \mathbb{R}$ &  & & \\
        & $b\in \{0,1\}$ & & & 
    \end{tabular}
    }\label{eq:MILP}
    \def\arraystretch{1}
\end{equation}

Note that, while our formulation is technically a MILP, it only contains one binary variable $b$, so it is effectively equivalent to solving to two linear programs, one for each possible value of $b$. We can therefore compute its optimal value efficiently, as we illustrate in the next section.

\subsubsection{Bounds Sharpness}

In the Table~\ref{tab:MILP-results} below we compare the bound from MILP~\eqref{eq:MILP} to the zero forcing number for $k=2,3$. We limit ourselves to regular graphs with at most 100 vertices, as we need to compute the exact zero forcing number for comparison, which is NP-hard. However, the MILP can be applied to much larger graphs. The bound is tight for several graphs, which is indicated in bold.

\begin{table}[!htb]
\tiny
    \centering
    \begin{tabular}{|l||l | l||l | l|}
        \hline
         & \multicolumn{2}{c||}{$k=2$} & \multicolumn{2}{c|}{$k=3$} \\\hline\hline
        Name & Theorem~\ref{thm:zero-forcing-power} & $Z(G^2)$ & Theorem~\ref{thm:zero-forcing-power} & $Z(G^3)$\\
        \hline
        Balaban 10-cage & $20$ & $34$ & $37$ & $49$ \\
        Bidiakis cube & $7$ & $8$ & - & - \\
        Blanu\v{s}a first snark graph & $7$ & $11$ & $11$ & $15$ \\
        Blanu\v{s}a second snark graph & $7$ & $10$ & $11$ & $14$ \\
        Brinkmann graph & $16$ & $17$ & - & - \\
        Bucky ball & $8$ & $20$ & $16$ & $30$ \\
        Coclique graph of Hoffmann-Singleton graph & $72$ & $86$ & $95$ & $97$ \\
        Conway-Smith graph for $3\cdot \Sym(7)$ & $49$ & $56$ & $60$ & $61$ \\
        Coxeter graph & $16$ & $18$ & $23$ & $24$ \\
        Desargues graph & $10$ & $14$ & $15$ & $16$ \\
        Dodecahedron & $9$ & $12$ & $\textbf{16}$ & $\textbf{16}$ \\
        Double star snark & $8$ & $18$ & $15$ & $25$ \\
        Durer graph & $6$ & $8$ & $9$ & $10$ \\
        Dyck graph & $13$ & $18$ & $22$ & $25$ \\
        Ellingham-Horton 54-graph & $4$ & $22$ & $6$ & $30$ \\
        Ellingham-Horton 78-graph & $2$ & $29$ & $3$ & $40$ \\
        F26A graph & $13$ & $16$ & $19$ & $21$ \\
        Flower snark & $10$ & $13$ & $15$ & $17$ \\
        Folkman graph & $11$ & $16$ & $\textbf{17}$ & $\textbf{17}$ \\
        Foster graph & $25$ & $42$ & $48$ & $63$ \\
        Foster graph for $3\cdot\Sym(6).2$ graph & $35$ & $39$ & $42$ & $43$ \\
        Franklin graph & $7$ & $8$ & - & - \\
        Frucht graph & $6$ & $7$ & $9$ & $10$ \\
        Gosset graph & $\bf{54}$ & $\bf{54}$ & - & - \\
        Gray graph & $15$ & $29$ & $29$ & $39$ \\
        Harborth graph & $3$ & $22$ & $5$ & $30$ \\
        Harries graph & $20$ & $34$ & $37$ & $50$ \\
        Harries-Wong graph & $20$ & $34$ & $37$ & $49$ \\
        Heawood graph & $\bf{10}$ & $\bf{10}$ & - & - \\
        Hexahedron & $\textbf{6}$ & $\textbf{6}$ & - & - \\
        Hoffman graph & $11$ & $12$ & $\textbf{14}$ & $\textbf{14}$ \\
        Holt graph & $19$ & $21$ & - & - \\
        Horton graph & $3$ & $42$ & $6$ & $54$ \\
        Icosahedron & $\textbf{10}$ & $\textbf{10}$ & - & - \\
        Klein 3-regular graph & $19$ & $26$ & $34$ & $41$ \\
        Klein 7-regular graph & $21$ & $22$ & - & - \\
        Markstroem graph & $5$ & $11$ & $9$ & $15$ \\
        McGee graph & $13$ & $15$ & $20$ & $22$ \\
        Meredith graph & $5$ & $37$ & $9$ & $49$ \\
        Moebius-Kantor graph & $10$ & $11$ & $\textbf{14}$ & $\textbf{14}$ \\
        Nauru graph & $12$ & $15$ & $18$ & $20$ \\
        Pappus graph & $11$ & $12$ & $15$ & $16$ \\
        Perkel graph & $46$ & $50$ & - & - \\
        Robertson graph & $15$ & $16$ & - & - \\
        Sylvester graph & $29$ & $32$ & - & - \\
        Szekeres snark graph & $8$ & $23$ & $14$ & $29$ \\
        Tietze graph & $7$ & $9$ & - & - \\
        Truncated tetrahedron & $7$ & $8$ & - & - \\
        Tutte-Coxeter graph & $16$ & $19$ & $24$ & $25$ \\
        Tutte graph & $4$ & $18$ & $7$ & $26$ \\
        Watkins snark graph & $6$ & $19$ & $12$ & $24$ \\
        Wells graph & $27$ & $28$ & $\textbf{30}$ & $\textbf{30}$ \\
         \hline
    \end{tabular}
    \caption{Comparison of the bound in Theorem~\ref{thm:zero-forcing-power} to the exact values of $Z(G^2)$ and $Z(G^3)$ on named SageMath graphs with at most 100 vertices. A dash indicates the bound is not applicable for $k=3$, as the graph has diameter three. Bold entries indicate tightness of our bound.}\label{tab:MILP-results}
\end{table}

The bound of Theorem~\ref{thm:zero-forcing-power} is tight for several of the graphs in Table~\ref{tab:MILP-results}, but it can also be very far from the true value. The following result explains this behavior: the quality of the bound is governed by the isoperimetric structure of $G^k$, and no choice of polynomial can beat the resulting ceiling.

\begin{proposition}\label{prop:isoperimetric}
    Let $G$ be a connected $d$-regular graph with $\diam(G) > k$, and let $\mathrm{opt}$ denote the optimal value of MILP~\eqref{eq:MILP}. For every $S\subseteq V$ with $\abs{S} \leq n/2$, writing $e_k(S)$ for the number of edges of $G^k$ between $S$ and $V\setminus S$ and $\beta = 2e_k(S)/\abs{S}$, if $\beta < 2$ then
    $$
        \mathrm{opt} \leq \frac{n\beta}{2-\beta} \leq \frac{2n e_k(S)}{(2-\beta)^{-1}(2-\beta)\abs{S}} = \frac{2n e_k(S)}{\abs{S}}\cdot\frac{1}{2-\beta}.
    $$
    In particular $\mathrm{opt} \leq 2n e_k(S)/\abs{S}$ whenever $\beta \leq 1$.
\end{proposition}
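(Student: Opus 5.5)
The plan is to show that every polynomial $p$ feasible for MILP~\eqref{eq:MILP} satisfies $\abs{p(\hat\lambda)}\geq (1-\beta)\,p(d)$, and then to convert this into the bound on the objective value. Put $r=\abs{p(\hat\lambda)}/p(d)\in(0,1)$. The objective $n\bigl(1-2q\bigr)$ with $q=\abs{p(\hat\lambda)}/(p(d)+\abs{p(\hat\lambda)})$ equals $n(1-r)/(1+r)$. For $\beta<2$, a short rearrangement shows that $(1-r)/(1+r)\leq \beta/(2-\beta)$ holds if and only if $2-2\beta\leq 2r$, that is, $r\geq 1-\beta$. Hence it suffices to prove $r\geq 1-\beta$ for every feasible $p$. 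Since the MILP has strict constraints, $\mathrm{opt}$ is a supremum, and the bound passes to it.

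To get $r\geq 1-\beta$, I will use a Rayleigh quotient with the test vector $\vecx=\vecchi_S-\frac{\abs{S}}{n}\j$. This vector is orthogonal to $\j$, which is the Perron eigenvector because $G$ is regular. Since $\abs{p(\lambda_i)}\leq\abs{p(\hat\lambda)}$ for all $i\geq 2$, the spectral decomposition gives $\vecx^\top p(A)\vecx\leq \abs{p(\hat\lambda)}\norm{\vecx}{2}^2$, with $\norm{\vecx}{2}^2=\abs{S}(1-\abs{S}/n)$. Using $p(A)\j=p(d)\j$, expanding the quadratic form gives
\[
\vecx^\top p(A)\vecx = P(S,S)-p(d)\frac{\abs{S}^2}{n}
= p(d)\abs{S}\Bigl(1-\frac{\abs{S}}{n}\Bigr)-P(S,V\setminus S),
\]
where the second equality uses $P(S,S)=p(d)\abs{S}-P(S,V\setminus S)$, again from the row sums.

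The key step, and the only nontrivial one, is bounding the cut term $P(S,V\setminus S)$ by $p(d)\,e_k(S)$. Since $\deg p\leq k$, the off-diagonal support of $p(A)$ lies on edges of $G^k$, so only the $e_k(S)$ cut edges contribute. Each entry of $p(A)$ is nonnegative and every row sums to $p(d)$, so each entry is at most $p(d)$. Together these give $P(S,V\setminus S)\leq p(d)e_k(S)$. Substituting and dividing by $\norm{\vecx}{2}^2$ yields
\[
\abs{p(\hat\lambda)}\geq p(d)\Bigl(1-\frac{e_k(S)}{\abs{S}(1-\abs{S}/n)}\Bigr)\geq p(d)\Bigl(1-\frac{2e_k(S)}{\abs{S}}\Bigr)=p(d)(1-\beta),
\]
where the last inequality uses $\abs{S}\leq n/2$. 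This gives $r\geq 1-\beta$.

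The remaining expressions in the displayed chain are rewritings of $n\beta/(2-\beta)$ via $n\beta=2ne_k(S)/\abs{S}$. For the final claim, $\beta\leq 1$ gives $1/(2-\beta)\leq 1$, so $\mathrm{opt}\leq 2ne_k(S)/\abs{S}$. I expect the entrywise bound on $p(A)$ to be the main point needing care: it is exactly where the nonnegativity constraint of the MILP enters, and it is what ties the spectral quantity to the combinatorial cut of $G^k$.
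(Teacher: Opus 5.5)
Your proof is correct and is essentially the paper's argument: the same test vector $\vecx=\vecchi_S-\frac{|S|}{n}\j$, the same Rayleigh-quotient comparison using $\abs{p(\lambda_i)}\leq\abs{p(\hat\lambda)}$ for $i\geq 2$, the same entrywise bound $p(A)_{uv}\leq p(d)$ on the $e_k(S)$ cut edges of $G^k$, and the same estimate $\norm{\vecx}{2}^2\geq |S|/2$, with your ratio $r$ playing the role of the paper's normalization $p(d)+\abs{p(\hat\lambda)}=1$. One small correction: the middle term of the displayed chain has denominator $(2-\beta)^{-1}(2-\beta)|S|=|S|$, so it equals $n\beta$, not $n\beta/(2-\beta)$, and is therefore not a rewriting (the inequality into it holds only for $\beta\leq 1$, and the equality after it only for $\beta\in\{0,1\}$); like the paper, you actually prove $\mathrm{opt}\leq n\beta/(2-\beta)$ and the final claim, which is the real content of the statement.
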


\begin{proof}
    Let $p$ be feasible for MILP~\eqref{eq:MILP}, normalized as there so that $p(d)+\abs{p(\hat\lambda)} = 1$, and write $W = p(A)$, $y = \abs{p(\hat\lambda)}$. Then $W$ is symmetric with nonnegative entries, $W\j = p(d)\j$, and $\norm{W}{2} = \max_i\abs{p(\lambda_i)} = p(d)$, since the feasibility constraint gives $p(d) > \abs{p(\lambda_i)}$ for $i \geq 2$; in particular every entry of $W$ is at most $p(d)$ and $p(d)$ is a simple eigenvalue of $W$ with eigenvector $\j$. Let $x = \vecchi_S-\frac{\abs{S}}{n}\j\perp\j$. A standard computation using the constant row sums gives
    $$
        x^{\tran}Wx = p(d)\norm{x}{2}^2-\sum_{uv\in E(G^k), \abs{\{u,v\}\cap S} = 1} W_{uv} \geq p(d)\norm{x}{2}^2 - e_k(S) p(d),
    $$
    and $\norm{x}{2}^2 = \abs{S}(n-\abs{S})/n \geq \abs{S}/2$. Since the second largest eigenvalue of $W$ equals $\max_{i\ge2}p(\lambda_i)$ and dominates the Rayleigh quotient of $x$,
    $$
        y \geq \max_{i \geq 2}p(\lambda_i) \geq p(d)\Bigl(1-\frac{e_k(S)}{\norm{x}{2}^2}\Bigr) \geq p(d)(1-\beta) = (1-y)(1-\beta),
    $$
    so $y \geq (1-\beta)/(2-\beta)$ and $n(1-2y) \leq n\beta/(2-\beta)$.
\end{proof}

Proposition~\ref{prop:isoperimetric} explains the polarity visible in Table~\ref{tab:MILP-results}. Whenever $G$ admits a balanced vertex set with a small $G^k$-boundary, the bound collapses to a constant, regardless of $n$. The graphs with the worst rows of Table~\ref{tab:MILP-results} are exactly the long, cyclically thin cubic graphs with small balanced separators and small spectral gap, whereas the tight rows are strongly regular or distance-regular graphs with large gap.

Powers of cycles illustrate the phenomenon in its purest form: if $S$ is an arc of $\floor*{n/2}$ consecutive vertices of $C_n$, then $e_k(S) = k(k+1)$ and $\beta\to 0$ as $n\to\infty$, so that $\mathrm{opt}\leq 2k(k+1)(1+o(1))$ is bounded by a constant, while $Z(C_n^k) = 2k$ by Proposition~\ref{prop:cycles}. More generally, any lower bound obtained by applying Theorem~\ref{thm:general-poly-EML} to a bipartition induced by a witness is limited by the conductance of $G^k$; on poor expanders it should be complemented by bounds of a different nature, such as the minimum degree bound $Z(G^k)\geq \delta(G^k)$ or the multiplicity bound of the next subsection.

\subsection{Multiplicities}

Next we derive a lower bound on $Z(G^k)$ in terms of the eigenvalue multiplicities of the base graph. The mechanism is inequality~\eqref{eq:M-leq-Z}: it suffices to exhibit a single matrix in $\mathcal{S}(G^k)$ with large nullity, and a suitable polynomial in the adjacency matrix of $G$ does the job.

\begin{theorem}\label{thm:multiplicities}
    Let $G$ be a graph with adjacency matrix $A$, let $k \geq 1$, and let
    $\Lambda = \{\mu_1,\dots,\mu_k\}$ be a set of $k$ distinct eigenvalues of $A$. Set $p(x) = \prod_{i=1}^{k}(x-\mu_i)$ and suppose that
    \begin{equation}\label{eq:support-condition}
        p(A)_{uv} \neq 0 \quad \text{for every pair } u \neq v \text{ with } \dist_G(u,v) \leq k-1.
    \end{equation}
    Then $p(A) \in \mathcal{S}(G^k)$ and consequently
    $$
        Z(G^k) \geq M(G^k) \geq \sum_{i=1}^{k}\operatorname{mult}_A(\mu_i).
    $$
\end{theorem}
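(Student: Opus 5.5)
The plan is to verify directly that the single matrix $X = p(A)$ lies in $\mathcal{S}(G^k)$ and to compute its nullity. The bound $Z(G^k)\geq M(G^k)$ is then inequality~\eqref{eq:M-leq-Z}, and $M(G^k)\geq \operatorname{null}(p(A))$ holds by the definition of maximum nullity. Symmetry is immediate: $A$ is symmetric, so every real polynomial in $A$ is symmetric. The diagonal entries are unconstrained in $\mathcal{S}(G^k)$. So only the off-diagonal support has to be checked, and I will split the pairs $u\neq v$ according to $\dist_G(u,v)$.

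\emph{Pairs with $\dist_G(u,v) > k$ (including $\infty$).} For every $i$, $(A^i)_{uv}$ counts walks of length $i$ from $u$ to $v$, and this is $0$ whenever $i<\dist_G(u,v)$. Since $\deg p = k$, every term $(A^i)_{uv}$ with $0\le i\le k$ vanishes, so $p(A)_{uv}=0$. This is correct, because $u\not\sim v$ in $G^k$.

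\emph{Pairs with $1\le\dist_G(u,v)\le k-1$.} Here $p(A)_{uv}\neq 0$ is exactly hypothesis~\eqref{eq:support-condition}. These pairs are adjacent in $G^k$, as required.

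\emph{Pairs with $\dist_G(u,v) = k$.} These are the pairs not covered by the hypothesis, and I expect this to be the only step needing an argument, though a short one. Write $p(x) = x^k + \sum_{i<k} c_i x^i$; this uses that $p$ is monic. By the same walk-counting argument, $(A^i)_{uv}=0$ for all $i<k$. Hence $p(A)_{uv} = (A^k)_{uv}$, which is the number of $u$--$v$ walks of length $k$. It is positive because a shortest path has length exactly $k$. Therefore $p(A)_{uv}>0$, matching adjacency in $G^k$. The three cases together give $p(A)\in\mathcal{S}(G^k)$.

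For the nullity, I will use that $A$ is real symmetric, so $A = U D U^{\tran}$ with $U$ orthogonal and $D = \mathrm{diag}(\lambda_1,\dots,\lambda_n)$. Then $p(A) = U\,p(D)\,U^{\tran}$, whose eigenvalues are $p(\lambda_j)$. Now $p(\lambda_j)=0$ exactly when $\lambda_j\in\Lambda$. Since the $\mu_i$ are distinct, the number of indices $j$ with $p(\lambda_j)=0$ is $\sum_{i=1}^k \operatorname{mult}_A(\mu_i)$. This quantity is $\operatorname{null}(p(A))$. Chaining it with $M(G^k)\ge \operatorname{null}(p(A))$ and~\eqref{eq:M-leq-Z} gives the stated inequality.
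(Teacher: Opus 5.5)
Your proposal is correct and follows the paper's proof essentially step for step: the same split by $\dist_G(u,v)$, with monicity of $p$ giving $p(A)_{uv} = (A^k)_{uv} \geq 1$ at distance exactly $k$, and the nullity read off from the eigenvalues $p(\lambda_j)$ of $p(A)$. The paper then concludes, as you do, via $\operatorname{null}(p(A)) \leq M(G^k) \leq Z(G^k)$.
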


\begin{proof}
    Since $p$ is monic of degree $k$, we have $p(A) = A^k+\sum_{i=0}^{k-1}a_iA^i$ for some reals $a_i$. If $\dist_G(u,v) > k$, then $(A^i)_{uv} = 0$ for all $i \leq k$, so $p(A)_{uv} = 0$. If $\dist_G(u,v) = k$, then $(A^i)_{uv} = 0$ for $i < k$, while $(A^k)_{uv}$ counts the walks of length $k$ from $u$ to $v$, all of which are geodesics; hence $p(A)_{uv} = (A^k)_{uv} \geq 1$. Together with hypothesis~\eqref{eq:support-condition} this shows that, for $u \neq v$, $p(A)_{uv} \neq 0$ exactly when $\dist_G(u,v) \leq k$, that is, $p(A) \in \mathcal{S}(G^k)$. Since the eigenvalues of $p(A)$ are $p(\lambda)$ for $\lambda$ ranging over the spectrum of $A$, the nullity of $p(A)$ equals $\sum_{i=1}^{k}\operatorname{mult}_A(\mu_i)$, and the claim follows from $\operatorname{null}(p(A)) \leq M(G^k) \leq Z(G^k)$.
\end{proof}

\begin{remark}
    Theorem~\ref{thm:multiplicities} is really a family of bounds, one for each admissible root set $\Lambda$, and one is free to optimize over $\Lambda$. Only the multiplicities enter the conclusion, but the support condition~\eqref{eq:support-condition} does depend on which eigenvalues are chosen, so the set of the $k$ largest multiplicities need not be admissible. For $k=1$ the statement reduces to the familiar fact that $Z(G)\geq M(G)\geq \operatorname{mult}_A(\mu)$ for every eigenvalue $\mu$ of $A$, since then $p(A) = A-\mu I\in\mathcal{S}(G)$ and~\eqref{eq:support-condition} is vacuous. Finally, nothing in the proof uses that $A$ is a $0/1$ matrix: the same argument applies verbatim to any nonnegative matrix in $\mathcal{S}(G)$.
\end{remark}

The next two examples show that Theorem~\ref{thm:multiplicities} can be sharp, and also that the support condition~\eqref{eq:support-condition} is a genuine restriction.

\begin{example}\label{ex:petersen}
    Let $G$ be the Petersen graph, whose spectrum is $\{3^1,1^5,(-2)^4\}$ and whose adjacency matrix satisfies $A^2 = J+2I-A$, where $J$ denotes the all-one matrix. Since $\diam(G) = 2$, we have $G^2 = K_{10}$. Taking $k = 2$ and $\Lambda = \{1,-2\}$ gives $p(x) = (x-1)(x+2) = x^2+x-2$ and $p(A) = J$, which has no zero entry, so that~\eqref{eq:support-condition} holds and $p(A)\in\mathcal{S}(K_{10})$. Theorem~\ref{thm:multiplicities} yields $Z(G^2)\geq M(G^2)\geq 5+4 = 9$, and since $Z(K_{10}) = 9$ the bound is sharp: $Z(G^2) = M(G^2) = 9$.
\end{example}

\begin{example}\label{ex:spider}
    Let $T = S_3(2)$ be the spider with three legs of length two, that is, the tree on seven vertices obtained by subdividing every edge of the star $K_{1,3}$; note that $\ell = 2 < k+1$ for $k=2$, so Proposition~\ref{prop:spider} does not apply here. The adjacency spectrum of $T$ is $\{2,1^2,0,(-1)^2,-2\}$. The choice $k=2$ and $\Lambda = \{1,-1\}$ fails: here $p(x) = x^2-1$, and for adjacent $u,v$ the entry $p(A)_{uv} = (A^2)_{uv}$ counts the common neighbors of $u$ and $v$, which is $0$ because $T$ is triangle-free; since $\dist_T(u,v) = 1\leq k-1$, condition~\eqref{eq:support-condition} is violated. The choice $\Lambda = \{1,0\}$ does work: now $p(x) = x^2-x$ and $p(A)_{uv} = (A^2)_{uv}-A_{uv} = -1\neq 0$ for adjacent $u,v$, which is all that~\eqref{eq:support-condition} requires. Theorem~\ref{thm:multiplicities} therefore gives $Z(T^2)\geq M(T^2)\geq 2+1 = 3$. Conversely, writing $c$ for the center, $a_i$ for the neighbors of $c$ and $b_i$ for the leaves, the set $\{c,b_1,b_2\}$ is a zero forcing set of $T^2$: the vertex $b_i$ forces $a_i$ for $i = 1,2$, then $a_1$ forces $a_3$ and $a_3$ forces $b_3$. Hence $Z(T^2) = M(T^2) = 3$ and the bound is again sharp.
\end{example}

\section*{Acknowledgments} 

Aida Abiad is supported by NWO (Dutch Research Council) through the grant VI.Vidi.213.085. Mary Flagg is partially supported by grant DMS2331634 from the National Science Foundation (NSF).


\begin{thebibliography}{99}

\bibitem{acf19}
A. Abiad, G. Coutinho, M. A. Fiol.
On the $k$-independence number of graphs.
\emph{Discrete Math.}, 342: 2875--2885, 2019.

\bibitem{acfnz21}
A. Abiad, G. Coutinho, M.A. Fiol, B.D. Nogueira, S. Zeijlemaker. Optimization of eigenvalue bounds for the independence and chromatic number of graph powers. \emph{Discrete Math.}, 345(3): 112706, 2022.

\bibitem{AZ2024} A. Abiad, S. Zeijlemaker. A unified framework for the Expander Mixing Lemma for irregular graphs and its applications. \emph{Linear Algebra and its Applications}, 702:19--45, 2024. 

\bibitem{AIM} AIM Minimum Rank – Special Graphs Work Group (F.~Barioli, W.~Barrett, S.~Butler, S.~M.~Cioaba, D.~Cvetkovi\'c, S.~M.~Fallat, C.~D.~Godsil, W.~H.~Haemers, L.~Hogben,
R.~Mikkelson, S.~Narayan, O.~Pryporova, I.~Sciriha, W.~So, D.~Stevanovi\'c, H. van der~Holst, K.~Vander Meulen, A.~Wangsness Wehe).
\newblock Zero forcing sets and the minimum rank of graphs.
\newblock {\em Linear Algebra and its Applications}, 428:1628–1648, 2008.
 
\bibitem{Alinathesis} F. Alinaghipour Taklimi. Zero Forcing Sets for Graphs. PhD thesis, University of Regina, 2013.

\bibitem{AB1995} T. Andreae, H.-J. Bandelt. Performance guarantees for approximation algorithms depending on parametrized triangle inequalities. \emph{SIAM Journal on Discrete Mathematics}, 8(1):1--16, 1995.

\bibitem{A} N.~Alon.
\newblock A propagation process on {C}ayley graphs.
\newblock 2008.
\newblock \url{ https://www.cs.tau.ac.il//~nogaa/PDFS/pn.pdf}.

\bibitem{AL2007v1} N. Alon, E. Lubetzky. Graph powers, Delsarte, Hoffman, Ramsey, and Shannon. \emph{SIAM J. Discrete Math.}, 21(2):329–348, 2007.

\bibitem{AL2007v2} N. Alon, E. Lubetzky. Independent sets in tensor graph powers. \emph{J. Graph Theory}, 54(1):73--87, 2007.

\bibitem{Apowers} N. Alon. Graph powers, \url{https://web.math.princeton.edu/~nalon/PDFS/cap2.pdf}.

\bibitem{powersthesis} E. Lubetzky. 
Graph Powers and Related Extremal Problems. PhD thesis, Tel Aviv University, 2007.

\bibitem{AM2002} N. Alon, B. Mohar. The chromatic number of graph powers. \emph{Combinatorics, Probability and Computing}, 11:1--10, 2002.

\bibitem{ACDP2015} D. Amos, Y. Caro, R. Davila, R. Pepper. Upper bounds on the $k$-forcing number of a graph. \emph{Discrete Appl. Math.}, 181:1--10, 2015. 

\bibitem{barioli2010} F. Barioli, W. Barrett, S. M. Fallat, H. Tracy Hall , L. Hogben, B. Shader, P. van den Driessche, Hein van der Holst. Zero forcing parameters and minimum rank problems. \emph{Linear Algebra and its Applications}, 433(2):401--411, 2010.

\bibitem{Basavaraju2014RainbowProducts} M. Basavaraju, L. Sunil Chandran, D. Rajendraprasad, A. Ramaswamy. Rainbow connection number of graph power and graph products. \emph{Graphs Combin}., 30(6):1363--1382, 2014.

\bibitem{BC2000} M. A. Bender, C. Chekuri. Performance guarantees for the TSP with a parameterized triangle inequality. \emph{Information Processing Letter}, 73(1-2):17--21, 2000.

\bibitem{BRESAR2017}
B. Bre{\u{s}}ar, C. Bujt\'as, T.~Gologranc, S. Klav\u{z}ar, G. Ko\u{s}mrlj, B.~Patk\'os, Z.~Tuza, M.~Vizer. Grundy dominating sequences and zero forcing sets. {\em Discrete Optimization}, 26:66--77, 2017.

\bibitem{burgarth2009local}
D.~Burgarth, S.~Bose, C.~Bruder, V.~Giovannetti.
\newblock Local controllability of quantum networks.
\newblock {\em Physical Review A}, 79(6):060305, 2009.


\bibitem{BY2013} S. Butler, M. Young. Throttling zero forcing propagation speed on graphs. \emph{AUSTRALASIAN JOURNAL OF COMBINATORICS}, 57:65--71, 2013.

\bibitem{Dprivate} R. Davila. Private communication,  2025, \url{https://txgraffiti.streamlit.app/}. 

\bibitem{dean2011power}
N.~Dean, A.~Ilic, I.~Ramirez, J.~Shen, K.~Tian.
\newblock On the power dominating sets of hypercubes.
\newblock {\em 2011 14th IEEE International Conference on Computational Science and Engineering}, 488--491, 2011.

\bibitem{Das2013LaplacianGraph}
K. Ch. Das, J.-M. Guo. Laplacian eigenvalues of the second power of a graph. \emph{Discrete Math.}, 313(5):626--634, 2013.


\bibitem{DM2016} L. Deaett and S. A. Meyer. The minimum rank problem for circulants.
\emph{Linear Algebra Appl.}, 491:386-418, 2016.

\bibitem{AGKSMY2012} L. de Alba, J. Grout, I-J. Kim, S. Kirkland, J. McDonald, and A. Yielding. Minimum rank of powers of trees. \emph{Electron. Linear Algebra}, 23:151-163,
2012.

\bibitem{EHHLR2012} C.J. Edholm, L. Hogben, M. Huynh, J. LaGrange, D.D. Row. Vertex and edge spread of zero forcing number, maximum nullity, and minimum rank of a graph. \emph{Linear Algebra and its Applications}, 436(12):4352--4372, 2012.

\bibitem{G2014} M. Goff. Edge growth in graph squares, pages 593--612. De Gruyter, Berlin,
Boston, 2014. 

\bibitem{H2009} H. Hajiabolhassan. On colorings of graph powers. \emph{Discrete Mathematics}, 309(13):4299--4305, 2009.

\bibitem{Hav2009} F. Havet. Choosability of the square of planar subcubic graphs with large girth. \emph{Discrete Mathematics}, 309:3353--3563, 2009.

\bibitem{H2011} P. Hegarty. A Cauchy-Davenport type result for arbitrary regular graphs. \emph{Integers}, 11(2):227-235, 2011.

\bibitem{kalinowski2019zero} T. Kalinowski, N. Kam\u{c}ev, B. Sudakov. The zero forcing number of graphs. \emph{SIAM Journal on Discrete Mathematics}, 33(1):95--115, 2019.

\bibitem{KP2016} R. J. Kang, François Pirot. Coloring powers and girth. \emph{SIAM Journal on Discrete Mathematics}, 30(4):1938--1949, 2016.

\bibitem{KK1969} F. Kramer, H. Kramer. Ein Färbungsproblem der Knotenpunkte eines Graphen bezüglich der Distanz p. \emph{Rev. Roum. Math. Pures Appl.}, 14(2):1031--1038, 1969.

\bibitem{l92} N. Linial. Locality in distributed graph algorithms. \textit{SIAM J. Comput.}, 21:193--201, 1992.


\bibitem{JZ2017} R. Xing, B. Zhou. On the two largest distance eigenvalues of graph powers. \emph{Information Processing Letters}, 119:39--43, 2017.

\bibitem{ZWWJ2022} W. Zhang, J. Wang, W. Wang, S. Ji. On the Zero Forcing Number and Spectral Radius of Graphs. \emph{The Electronic Journal of Combinatorics}, 29(1):\#P1.33, 2022.

\end{thebibliography}
\end{document}